\documentclass[letter,11pt]{article}
\usepackage[margin=1in]{geometry}
\usepackage[utf8]{inputenc} 
\usepackage[T1]{fontenc}
\usepackage{amsmath,amssymb,subcaption,graphicx,amsthm,enumitem,mathpazo,xcolor,tikz-cd, multicol,comment,lipsum,leftindex} 
\usepackage{hyperref}
\usepackage[capitalize,nameinlink,noabbrev,nosort]{cleveref}

\graphicspath{{ImagesEulerian/}}

\title{A combinatorial proof of an identity involving Eulerian numbers}

\author{Jer\'onimo Valencia Porras}
\date{}

\theoremstyle{plain}
\newtheorem{theorem}{Theorem}[section]
\newtheorem{prop}[theorem]{Proposition}
\newtheorem{lemma}[theorem]{Lemma}

\newtheorem{conjecture}[theorem]{Conjecture}
\theoremstyle{definition}
\newtheorem{definition}[theorem]{Definition}

\newtheorem{example}[theorem]{Example}
\newtheorem{observation}[theorem]{Observation}
\newtheorem{question}[theorem]{Question}
\newtheorem{remark}[theorem]{Remark}
%\numberwithin{equation}{section}
\crefname{defn}{Definition}{Definitions}
\crefname{theorem}{Theorem}{Theorems}
\crefname{lemma}{Lemma}{Lemmas}

%Renewed commands

\renewcommand{\to}{\longrightarrow}

% New commands

\newcommand{\tcb}[1]{\textcolor{blue}{#1}}

% Math Operators

\DeclareMathOperator{\vol}{vol}
\DeclareMathOperator{\alc}{alc}
\DeclareMathOperator{\lin}{lin}
\DeclareMathOperator{\aff}{aff}
\DeclareMathOperator{\word}{word}
\DeclareMathOperator{\words}{words}
\DeclareMathOperator{\comp}{comp}
\DeclareMathOperator{\conv}{conv}
\DeclareMathOperator{\pair}{pair}
\DeclareMathOperator{\des}{des}

\begin{document}

\maketitle
\begin{abstract}
    We give a combinatorial proof of an identity that involves Eulerian numbers and was obtained algebraically by Brenti and Welker (2009). To do so, we study alcoved triangulations of dilated hypersimplices. As a byproduct, we describe the dual graph of the triangulation in the case of the dilated standard simplex and the hypersimplex, conjecture its structure for dilated hypersimplices. Also, we give a new proof of the classical result that the Eulerian numbers coincide with the normalized volumes of the hypersimplices. 
\end{abstract}

%\tableofcontents

\section{Introduction}

Brenti and Welker \cite{BRENTI2009545} studied the transformation of the numerator of rational formal power series after taking a subsequence of the coefficients and computing its generating series, motivated by the Veronese construction for graded algebras. In particular, they show how the coefficients of the numerator transform under this operation. 
We summarize their results and, to do so, we provide relevant definitions. A \emph{weak composition} of a nonnegative number $n$ is an ordered sequence of nonnegative numbers $\vec{c} = (c_1,c_2,\ldots,c_\ell)$ such that $c_1+c_2+\ldots+c_\ell = n$. 

\begin{definition}\label{def:composition-set}
    For $d,r,i\in\mathbb{N}$ and $d\geq 1$ let $$\mathfrak{C}(r,d,i) := \left\{ (c_1,c_2,\ldots,c_d)\in\mathbb{N}^d \; \big | \; c_1+c_2+\ldots+c_d = i \; , \; c_j\leq r \text{ for } 1\leq j\leq d \right\}.$$ Denote by $C(r,d,i)$ the size of the set $\mathfrak{C}(r,d,i)$.
\end{definition}

A \emph{partition} $\lambda$ of a nonnegative number $n$ is a sequence of nonnegative numbers in weakly decreasing order $\lambda_1\geq \lambda_2 \geq\ldots\geq \lambda_\ell \geq 0$ that sum to $n$. The numbers $\lambda_i$ are the \emph{parts of $\lambda$} and for a given integer number $k\geq 0$, the \emph{multiplicity of $k$ in $\lambda$}, denoted by $m_k(\lambda)$, is the number of times $k$ appears in the sequence. For $d\geq 1$, the value of $C(r,d,i)$ can be computed as $$C(r,d,i) = \sum_{\{\lambda \subseteq (r^d) \, : \, |\lambda| = i\}} \binom{d}{m_1(\lambda),m_2(\lambda),\ldots,m_r(\lambda),d-\ell(\lambda)}$$ where the sum runs over all partitions $\lambda$ of $i$ with biggest part $\lambda_1 \leq r$ and at most $d$ parts. The Veronese construction for formal power series is the following transformation, as described in \cite[Theorem 1.1]{BRENTI2009545}. Suppose $f(z)$ is a formal power series with complex coefficients satisfying $$f(z) = \sum_{n\geq 0} a_nz^n = \frac{h(z)}{(1-z)^d}$$ for some polynomial $h(z) = h_0+h_1z+\ldots+h_sz^s$ and with $d,s \geq 0$. Then for any positive integer $r\geq 1$,  $$f^{\langle r \rangle}(z) := \sum_{n\geq 0} a_{rn}z^n = \frac{h^{\langle r \rangle}_0+h^{\langle r \rangle}_1z+\ldots+h^{\langle r \rangle}_mz^m}{(1-z)^d}$$ where $m = \max(s,d)$ and, for $i=1,2,\ldots,m$, $$h^{\langle r \rangle}_i = \sum_{j=0}^s C(r-1,d,ir-j)h_j.$$

The transformation $h(z) \mapsto h^{\langle r \rangle}(z)$ has been studied recently \cite{Beck2010,RealRootednessJochemko2018,SymmetricJochemko2022}, and it has an interesting interpretation in the case of the Ehrhart theory of lattice polytopes. If $P$ is a lattice polytope, taking $f(z)$ to be the \emph{Ehrhart series of $P$}, $h^{\langle r \rangle}(z)$ is the $h^*$-polynomial of the dilated polytope $rP$. In particular, the result previously mentioned yields the $h^*$-vector of $rP$ as a linear combination of the corresponding vector of the original polytope (see \cite{beck2007computing} for the relevant definitions). 

Going further, the authors describe the Veronese construction using two different bases for the ring of formal power series of the form specified before; the first is the "monomial basis", and the second is related to the \emph{Eulerian polynomials}. By considering a change of basis for this transformation, they showed the following identity involving the numbers $C(r,d,i)$ and \emph{the coefficients of the Eulerian polynomials} (see \cref{def:Eulerian-numbers} and \cref{rem:eulerian-convention} for a discussion on these numbers). 

\begin{prop}\cite[Prop. 2.3]{BRENTI2009545}\label{prop:identities}
    Let $d,r \geq 1$. Then 
    \begin{equation}\label{eq:identity-general-i}
        \sum_{j=0}^d C(r-1,d+1,ir-j)A(d,j) = r^d A(d,i)
    \end{equation}
    for $i=0,1,\ldots,d.$ In particular, when $i=1$,
    \begin{equation}\label{eq:identity-i-1}
        \sum_{j=0}^d C(r-1,d+1,r-j)A(d,j) = r^d.
    \end{equation}
\end{prop}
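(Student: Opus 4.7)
The plan is to interpret both sides of \eqref{eq:identity-general-i} geometrically as counts of maximal simplices in an alcoved triangulation of the dilated hypersimplex $r\Delta_{i+1,d+1} = \{x \in [0,r]^{d+1} : x_1 + \cdots + x_{d+1} = (i+1)r\}$. The starting point is the classical fact that the normalized volume of $\Delta_{i+1,d+1}$ equals the Eulerian number $A(d,i)$; since this polytope is $d$-dimensional, dilating by $r$ multiplies the normalized volume by $r^d$, so the right-hand side counts all maximal simplices of the alcoved triangulation coming from the braid arrangement with integer translates (the hyperplanes $x_a - x_b = c$, $c \in \mathbb{Z}$), each of which is unimodular of normalized volume one.

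The key combinatorial step is to set up a bijection between those alcoves and pairs $(w, \vec{c})$, where $w$ is a permutation with exactly $j$ descents (contributing $A(d,j)$) and $\vec{c} = (c_1, \ldots, c_{d+1})$ is a weak composition of $ir - j$ with each $c_k \le r-1$ (contributing $C(r-1, d+1, ir - j)$). Intuitively, each alcove of $r\Delta_{i+1,d+1}$ is an integer translate of a "fundamental" alcove by the vector $\vec{c}$; the fundamental alcove records the permutation $w$ via the Stanley-style triangulation of $\Delta_{i+1,d+1}$, and the components of $\vec{c}$ record how far the alcove has been shifted in each coordinate direction. The constraint $\sum c_k = ir - j$ would arise because the coordinate-sum on the dilated hypersimplex is fixed at $(i+1)r$ while the fundamental alcove contributes a specific amount depending on the descent statistic of $w$; the bounds $0 \le c_k \le r-1$ are forced by the facets $x_a = 0$ and $x_a = r$ of the dilated hypersimplex.

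Summing over $j$ from $0$ to $d$ then yields \eqref{eq:identity-general-i}, with \eqref{eq:identity-i-1} following by specializing to $i=1$. The main obstacle will be the bijective step: making the "permutation plus translate" decomposition precise, in particular verifying that the shift in the coordinate sum is exactly $j$ (rather than $j \pm 1$ or similar) and that each $c_k$ is forced into $\{0,1,\ldots,r-1\}$. This requires a careful analysis of which integer translates of the fundamental alcove remain inside $r\Delta_{i+1,d+1}$, and is precisely where the descent statistic of $w$ must enter the geometry; I expect the argument to hinge on how descents correspond to coordinates that are "reset" when an alcove is translated across a facet of the undilated hypersimplex.
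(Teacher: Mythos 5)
Your overall strategy is the same as the paper's: interpret $r^dA(d,i)$ as the normalized volume of the $r$-th dilate of a hypersimplex, hence as the number of alcoves in its alcoved triangulation, and match the left-hand side by decomposing each alcove into a translation vector (a bounded weak composition) plus a ``fundamental'' alcove encoded by a permutation with a prescribed number of descents. This is exactly the content of the maps $\pair_i$ and $\alc_1$ in \cref{thm:bijection-alcoves-pairs,thm:bijection-alcoves-pairs-general-case}, where the composition is the componentwise minimum $\comp(A)$ of the vertex set and the inverse map is $(\vec c,\sigma)\mapsto \vec c + A^{(j)}_\sigma$.

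Two concrete issues remain. First, an indexing problem: with the paper's convention (\cref{def:Eulerian-numbers}), $A(d,j)$ counts permutations of $[d]$ with $j-1$ descents, and the hypersimplex whose normalized volume is $A(d,i)$ is $\Delta_{i,d}=\{x\in[0,1]^{d+1}:\sum_k x_k=i\}$, not $\Delta_{i+1,d+1}$; if you insist on ``$j$ descents contributing $A(d,j)$,'' the bookkeeping already fails at $i=0$, where the identity requires $A(d,0)=0$. Second, and more substantively, the fundamental alcove cannot come from the triangulation of the \emph{fixed} hypersimplex as you write: all its vertices would have the same coordinate sum, so the constraint that the translate land in the hyperplane $\sum_k x_k=ir$ would force a single value of $\sum_k c_k$, hence a single $j$, and the sum over $j$ would never appear. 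What actually happens is that subtracting $\comp(A)$ from an alcove of $r\Delta_{i,d}$ yields an alcove of $\Delta_{j,d}$ for a $j\in\{1,\dots,d\}$ depending on $A$, and $j-1$ is the descent number of the associated permutation. Proving that every alcove decomposes uniquely this way --- in particular that the difference really is a $0/1$-simplex, i.e., an alcove of some hypersimplex --- is the heart of the argument; the paper does it via the Lam--Postnikov sorted-multiset characterization and the decorated matrices (\cref{lem:comp-A-from-decorated-matrix,prop:properties-comp-A-and-permutation-A}). You have correctly located this as the main obstacle, but as written the proposal is a plan for the paper's proof rather than a proof.
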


We point out that the derivation of these two equations is completely algebraic, and the appearance of both $C(r,d,i)$ and $A(d,j)$ is a consequence of algebraic considerations. Given the enumerative nature of the numbers in the equations, Brenti and Welker asked for a combinatorial proof of these identities. Our main result is to provide one such proof by constructing two pairs of bijections that yield these equations as corollaries. The key idea that allows us connect both sides of the equations is to consider dilations of hypersimplices, in particular their \emph{alcoved triangulations} (see \cref{sec:alcoved,sec:hypersimplices} for the definitions of the objects in the theorem). 

\begin{theorem}\label{thm:principal}
    Let $\mathcal{A}(r\Delta_{i,d})$ be the set of alcoves of the $r$-dilated hypersimplex $\Delta_{i,d}$. There exist bijections 
    \begin{align*}
        \words_i \; &:\; \mathcal{A}(r\Delta_{i,d})\longrightarrow [r]^d \times \mathfrak{A}(d,i) \\
        \pair_i\; &: \;\mathcal{A}(r\Delta_{i,d})\longrightarrow \bigcup_{j=1}^d \mathfrak{C}(r-1,d+1,ir-j)\times \mathfrak{A}(d,j)
    \end{align*}
    from which we obtain a combinatorial proof of \cref{eq:identity-general-i}.
\end{theorem}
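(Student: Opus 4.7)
The plan is to use the alcoves of $r\Delta_{i,d}$ as a common source and enumerate them in two distinct ways. Before constructing the bijections I would make the combinatorial encoding of alcoves precise: an alcove of $r\Delta_{i,d}$ is cut out by hyperplanes of the form $\{x_j=k\}$ and $\{\sum_{m\le\ell}x_{\sigma(m)}=k'\}$ for integers $k,k'$ and a permutation $\sigma\in S_d$, so that each alcove is specified by such a combinatorial type together with integer parameters constrained by the slab $\{0\le x_j\le r\}\cap\{\sum x_j=ir\}$. I would also invoke the classical identification of the alcoves of the undilated hypersimplex $\Delta_{i,d}$ with $\mathfrak{A}(d,i)$, which underlies the presence of Eulerian numbers on both sides of \cref{eq:identity-general-i}.

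For the bijection $\words_i$, the plan is a floor-plus-shape decomposition. Given an alcove $A\subset r\Delta_{i,d}$, I would assign a canonical integer shift in $[r]^d$ (for instance, the componentwise floor of a coordinate-minimum vertex of $A$) together with the translated alcove $A-\lfloor A\rfloor$, which is an alcove of the undilated hypersimplex and hence corresponds to a permutation in $\mathfrak{A}(d,i)$. Bijectivity reduces to verifying that every pair (shift, shape) in $[r]^d\times\mathfrak{A}(d,i)$ reassembles to a unique alcove inside $r\Delta_{i,d}$, which amounts to checking that the translates of unit-hypersimplex alcoves tile $r\Delta_{i,d}$.

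For the bijection $\pair_i$, the idea is to read off different data from $A$: a permutation $\sigma$ whose descent count $j$ depends on the combinatorial type of $A$---specifically on how many of its defining facets are diagonal rather than coordinate hyperplanes---together with a composition in $\mathfrak{C}(r-1,d+1,ir-j)$ recording the integer heights at which the walls of $A$ sit. The extra $(d+1)$-st entry of the composition and the uniform upper bound $r-1$ should be forced by the global equation $\sum x_k=ir$ and by the alcove condition that two adjacent integer levels cannot coincide. I would prove bijectivity by exhibiting the inverse, reconstructing an alcove from its composition--permutation pair step by step.

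The main obstacle I foresee is the construction of $\pair_i$. The map $\words_i$ is essentially a fundamental-domain argument, whereas $\pair_i$ requires identifying the right canonical composition so that its defining constraints ($d+1$ parts, each at most $r-1$, summing to $ir-j$) align precisely with the geometry of the dilated hypersimplex, and verifying that the descent count of the paired permutation always equals $ir$ minus the composition's sum. Once both bijections are in hand, computing $|\mathcal{A}(r\Delta_{i,d})|$ via $\words_i$ yields $r^d A(d,i)$, computing it via $\pair_i$ yields $\sum_{j} C(r-1,d+1,ir-j)A(d,j)$, and equating the two recovers \cref{eq:identity-general-i}.
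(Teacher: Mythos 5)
Your plan for $\pair_i$ is essentially the paper's: the composition is the componentwise minimum of the vertex set (the ``maximal translation toward the origin''), the permutation is read off from the combinatorial type of the translated alcove, and the identity $\sum_k c_k = ir - (\des(\sigma)+1)$ is exactly what the paper verifies. But your construction of $\words_i$ has a genuine error. You propose to subtract the componentwise floor of $A$ and claim the result is an alcove of the \emph{undilated} hypersimplex $\Delta_{i,d}$, so that ``the translates of unit-hypersimplex alcoves tile $r\Delta_{i,d}$.'' This is false already for $r\Delta_{1,2}$: the triangle of side $r$ contains $\binom{r}{2}$ ``down'' alcoves, and no integer translate of a down alcove lies in $\Delta_{1,2}$, which consists of the single ``up'' alcove; the translate $A-\lfloor A\rfloor$ of a down alcove is an alcove of $\Delta_{2,2}$. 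In general the floor-plus-shape decomposition produces a shape in $\Delta_{j,d}$ with $j$ \emph{varying} over $1,\ldots,d$, and the set of admissible floors depends on $j$ --- which is precisely why the target of $\pair_i$ is the union $\bigcup_{j=1}^d \mathfrak{C}(r-1,d+1,ir-j)\times\mathfrak{A}(d,j)$ rather than a product. So your two proposed maps are really the same map, and neither lands in $[r]^d\times\mathfrak{A}(d,i)$.

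The missing idea for $\words_i$ is a different, two-level decomposition: the alcoved triangulation of $r\Delta_{i,d}$ refines the $r$-dilation of the alcoved triangulation of $\Delta_{i,d}$, so every alcove $A$ lies in a unique dilated simplex $rB$ with $B\in\mathcal{A}(\Delta_{i,d})$. The factor $\mathfrak{A}(d,i)$ records which $B$ contains $A$ (via the bijection $\mathcal{A}(\Delta_{i,d})\to\mathfrak{A}(d,i)$), and the factor $[r]^d$ records the position of $A$ inside $rB\cong r\Delta_{1,d}$, using a separate bijection $\mathcal{A}(r\Delta_{1,d})\to[r]^d$ obtained by reading the positions of the ``jumps'' in the sorted multiset description of the vertices of $A$ (Lam--Postnikov). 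That inner bijection is where the factor $r^d$ actually comes from, and it is not a translation argument; without it the first bijection, and hence the identity, does not follow. Your closing paragraph (counting $\mathcal{A}(r\Delta_{i,d})$ two ways to deduce the identity) is correct once both bijections are established.
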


This document is organized as follows. In \cref{sec:background} we review Eulerian polynomials and a combinatorial identity for their coefficients, and the relevant results about alcoved polytopes. In \cref{sec:delta-1-d-simplices-labels} we construct the bijections needed to show \cref{eq:identity-i-1} by considering dilated standard hypersimplices; we also study the dual graph of the alcoved triangulation of these polytopes. We generalize and use these ideas in \cref{sec:delta-i-d-simplices-labels} to build the bijections of \cref{thm:principal}; moreover, we describe the dual graphs of the alcoved triangulations of general hypersimplices and conjecture the structure of these graph for dilated hypersimplices. 

\begin{remark}
    After completion of this manuscript, we found \cite[Proposition 9.4]{alcoved2} where the authors give a formula for the volume of \emph{thick hypersimplices} and specializes to \cref{eq:identity-general-i} when $\Phi = A_n$ and the parameters are carefully chosen. We point out that their argument relies on considerations of the alcoved triangulations related to affine Weyl groups and, in contrast, we provide a proof of \cref{eq:identity-general-i,eq:identity-i-1} by interpreting alcoves in terms of combinatorial objects. 
\end{remark}

\section{Preliminaries}\label{sec:background}

In this section we present the objects that are involved in the combinatorial proof that we aim to present. We point out that the indexing and some notation of these objects may differ from the literature. As a starting point, we denote by $[r]^d$ the set of strings of length $d$ in the elements $[r] := \{1,2,\ldots,r\}$. We refer to such a string as a \emph{word}, and the elements constituting the word are its \emph{letters.}

\subsection{Eulerian numbers and Eulerian polynomials}\label{sec:Eulerian_nums_polys}

For an in-depth introduction to these Eulerian object we refer the reader to \cite{petersen2015eulerian}. We summarize the pertinent facts about them in this section. Originally, Euler considered the power series  $$\sum_{k\geq 0} (k+1)^d z^k = \frac{A_d(z)}{(1-z)^{d+1}}$$ while studying the Riemann $\zeta$-function \cite{euler-riemman}. The numerators $A_d(z)$ are known as the \emph{Eulerian polynomials}\footnote{These are different from the \emph{Euler} polynomials $E_n(z)$ defined through the exponential generating series $$\sum_{n\geq 0}E_n(z)\frac{t^n}{n!} = \frac{2e^{zt}}{e^t +1}.$$}. If we write $A_d(z) = \sum_{j=0}^{d-1}  a_{d,j}\,z^j$, by taking derivatives we obtain that the coefficients of these polynomials satisfy $$a_{d+1,k} = (k+1)\,a_{d,k} + (n+1-k)\,a_{d,k-1}.$$ 

This linear recurrence characterizes a particular combinatorial object. The \emph{permutations of $d$ elements} is the set $\mathfrak{S}_d$ of all bijections from $[d]:=\{1,2,\ldots,d\}$ to itself. An element $\sigma \in \mathfrak{S}_d$ can be represented in \emph{one-line notation} by listing its values in order. That is, the one-line notation of $\sigma$ is the word $\sigma(1)\,\sigma(2)\,\ldots\,\sigma(d)$, sometimes written as $\sigma_1\,\sigma_2\,\ldots,\sigma_d$. A \emph{descent} of a permutation $\sigma\in\mathfrak{S}_d$ is a number $i\in[d]$ such that $\sigma(i) > \sigma(i+1)$. This corresponds to "going down" when reading the one-line notation from left to right. The number of descents of $\sigma$ is denoted by $\des(\sigma).$ By taking a permutation on $d$ elements and $k-1$ or $k$ descents and counting the different possibilities to turn it into a permutation on $d+1$ elements and $k$ descents, the linear recurrence mentioned before follows. In other words, $a_{d,k}$ counts the permutations of $d$ elements with $k$ descents. These numbers are the \emph{Eulerian numbers}. For our purposes, and to make notation line-up in \cref{sec:hypersimplices}, we need to shift the indices of these numbers. Hence, we adopt the following convention. 

\begin{definition}\label{def:Eulerian-numbers}
    Let $d\geq 1$ and $1\leq j \leq d$. Define $$\mathfrak{A}(d,j) := \left\{ \sigma\in \mathfrak{S}_d \; \big | \; \des(\sigma) = j-1 \right\}.$$ We denote the cardinality of this sets by $A(d,j) = |\mathfrak{A}(d,j)|$.
\end{definition}

\begin{remark}\label{rem:eulerian-convention}
    With the previous definition, the Eulerian numbers are given by $a_{d,k} = A(d,k+1)$. We will refer to the numbers $A(d,k)$ as the Eulerian numbers for exposition reasons, but there is a necessary word of caution when interpreting them combinatorially.\\
\end{remark}

\subsection{Alcoved polytopes}\label{sec:alcoved}

Given a finite set of vectors in $\mathbb{R}^n$, their \emph{convex hull} is the smallest convex set containing all of them. A \emph{polytope} is the convex hull of finitely many vectors in $\mathbb{R}^n$. The \emph{vertices} of $P$ is the smallest set of vectors such that their convex hull equals $P$. Dually, polytopes can be described as an intersection of finitely many hyperplanes. The \emph{linear span} of $P$ is the (affine) vector space generated by the vectors in the polytope and it is denoted by $\lin(P).$ A polytope $P$ has \emph{dimension $d$} if $\lin(P)$ is a $d$-dimensional (affine) space, or in other words, if $\lin(P) \cong \mathbb{R}^{d}$. Under this isomorphism, the lattice $\mathbb{Z}^{d} \subset \mathbb{R}^{d}$ corresponds to a lattice inside of $\lin(P)$, which is referred to as the \emph{affine span of $P$} and is denoted by $\aff(P).$ 
A \emph{lattice polytope} is a polytope whose vertices only have integer coordinates. We say that two lattice polytopes $P$ and $Q$ are \emph{affinely equivalent} if there exists an affine map that restricts to a bijection from $P$ to $Q$ and also an isomorphism from $\aff(P)$ to $\aff(Q)$. Note that this definition does not require the polytopes to have the same dimension. 
For a complete introduction on polytopes we refer the reader to \cite{ziegler2012lectures}. 

We focus on a particular family of lattice polytopes known as \emph{alcoved polytopes}. In the rest of this section we review the relevant information about them from \cite{lam2007alcoved}. 
There are two ways in which alcoved polytopes usually appear in the literature, depending on the coordinates that are chosen to describe them. To deal with this, we define $\mathbb{R}_{x}^n$ to be the Euclidean space with points with coordinates $(x_1,\ldots,x_n)$. 

\begin{definition}\label{def:alcoved_polytopes}
    An \emph{alcoved polytope} $P$ is a polytope that has one of the following hyperplane descriptions:
    \begin{itemize}
        \item An $(H,z)$-representation $$P = P(b_{ij},c_{ij}) = \left\{ (z_1,z_2,\ldots,z_{n-1})\in\mathbb{R}_z^{n-1} \; : \; b_{ij} \leq z_i-z_j \leq c_{ij} \quad \text{for} \quad 0\leq i < j \leq n-1 \right\}$$ with $z_0 := 0$ and $b_{ij},c_{ij} \in \mathbb{Z}$ for all $i$ and $j$.

        \item An $(H,x)$-representation $$P = P(b_{ij},c_{ij},k) = \left\{ (x_1,x_2,\ldots,x_n)\in\mathbb{R}_x^{n} \; : \; 
        \begin{matrix}
            b_{ij} \leq x_{i+1}+\cdots+x_j \leq c_{ij} \;\; \text{for} \;\; 0\leq i < j \leq n \\ 
            x_1+x_2+\cdots+x_n = k
        \end{matrix}
        \right\}$$ where $k\in \mathbb{Z}$ and $b_{ij},c_{ij} \in \mathbb{Z}$ for all $i$ and $j$.
    \end{itemize}
\end{definition}

\begin{remark}\label{rem:equivalent_defs}
    Both of these descriptions give rise to affinely equivalent polytopes. Indeed,  consider the maps $\varphi \, : \, \mathbb{R}_x^{n} \to \mathbb{R}_z^{n-1}$ given by $$\varphi(x_1,x_2,\ldots,x_n) = (x_1\,,\,x_1+x_2\,,\,x_1+x_2+x_3\,,\ldots,\,x_1+x_2+\cdots+x_{n-1})$$ and $\psi_k\, : \, \mathbb{R}_z^{n-1} \to \mathbb{R}_x^{n}$ given by $$\psi_k(z_1,z_2,\ldots,z_{n-1}) = (z_1\,,\,z_2-z_1\,,\ldots,\,z_{n-1}-z_{n-2}\,,\,k-z_{n-1}).$$ 
\end{remark}

The most important alcoved polytopes for our purposes are the hypersimplices. 

\begin{definition}\label{def:hypersimplices}
    The \emph{$i$-th hypersimplex of dimension $d$}, denoted by $\Delta_{i,d}$, is the polytope with $(H,x)$-representation
    \begin{equation}\label{eq:hypersimplex}
        \Delta_{i,d} = \left\{ (x_1,x_2,\ldots,x_d)\in\mathbb{R}_x^d \; :\; \begin{matrix}
                0 \leq x_{i}\leq 1 \;\; \text{for} \;\; 1\leq i \leq d \\ 
                x_1+x_2+\cdots+x_d = i
        \end{matrix}
        \right\}.
    \end{equation}
    
    The \emph{standard simplex of dimension $d$} is the first hypersimplex of dimension $d$, that is $\Delta_{1,d}$. It can also be described as the convex hull of the standard basis vectors in $\mathbb{R}_x^n$. A \emph{unimodular simplex} is a polytope $S$ that is affinely equivalent to $\Delta_{1,d}$.
\end{definition}

\begin{remark}
    With the notation we use we are emphasizing the dimension of the hypersimplex rather than the dimension of the space in which its $(H,x)$-version lies in. Notice that this conflicts with the notation of \cite{lam2007alcoved}.
\end{remark}

\begin{example}
    Under the affine map defined in \cref{rem:equivalent_defs}, the standard simplex of dimension $d$ is mapped to $$\varphi(\Delta_{1,d}) = \conv\left\{\vec{1} - \sum_{j=1}^{i-1} \vec{e}_j\in \mathbb{R}_z^{n-1} \; : \;i=1,2,\ldots,n\right\}$$ where $\vec{1}$ is the all-ones vector and $\left\{\vec{e}_j\,:\,j=1,2,\ldots,n-1\right\}$ is the standard basis of $\mathbb{R}_z^{n-1}$. 
\end{example}

A \emph{subdivision} of a polytope $P$ is a collection of polytopes $\mathfrak{P}$ such that every face of a polytope in $\mathfrak{P}$ is also in $\mathfrak{P}$, any two polytopes in $\mathfrak{P}$ intersect in a common face and the union of all the polytopes in $\mathfrak{P}$ equals $P$. If all the polytopes in the collection are (unimodular) simplices, we call the subdivision a \emph{(unimodular) triangulation}. One of the many interesting features of alcoved polytopes is that they come equipped with a unimodular triangulation to which we refer as the \emph{alcoved triangulation}. It is induced by the \emph{affine Coxeter arrangement of type $A_{n-1}$} that subdivides $\mathbb{R}_z^{n-1}$ into unimodular simplices called \emph{alcoves} (see \cref{fig:alcoved_example} for the case $n=3$). 

\begin{example}\label{ex:alcoved_polytopes}

The alcoved polytope $P = P(b_{ij},c_{ij})$ in $z$-coordinates with parameters $b_{0,1} = -4$, $c_{0,1} = -1$, $b_{0,2} = -3$, $c_{0,2} = -1$, $b_{1,2} = -2$, $c_{1,2} = 1$ is depicted in \cref{fig:alcoved_example}. The map $\psi_2(z_1,z_2) = (z_1,z_2-z_1,2-z_2)$ is an affine equivalence to a polytope in $x$-coordinates. After translating such polytope by the vector $(1,1,1),$ we obtain a polytope that lays on the hyperplane $x_1+x_2+x_3 = 5$ in $\mathbb{R}^3_x.$

\begin{figure}
    \centering
    \includegraphics[width=0.95\linewidth]{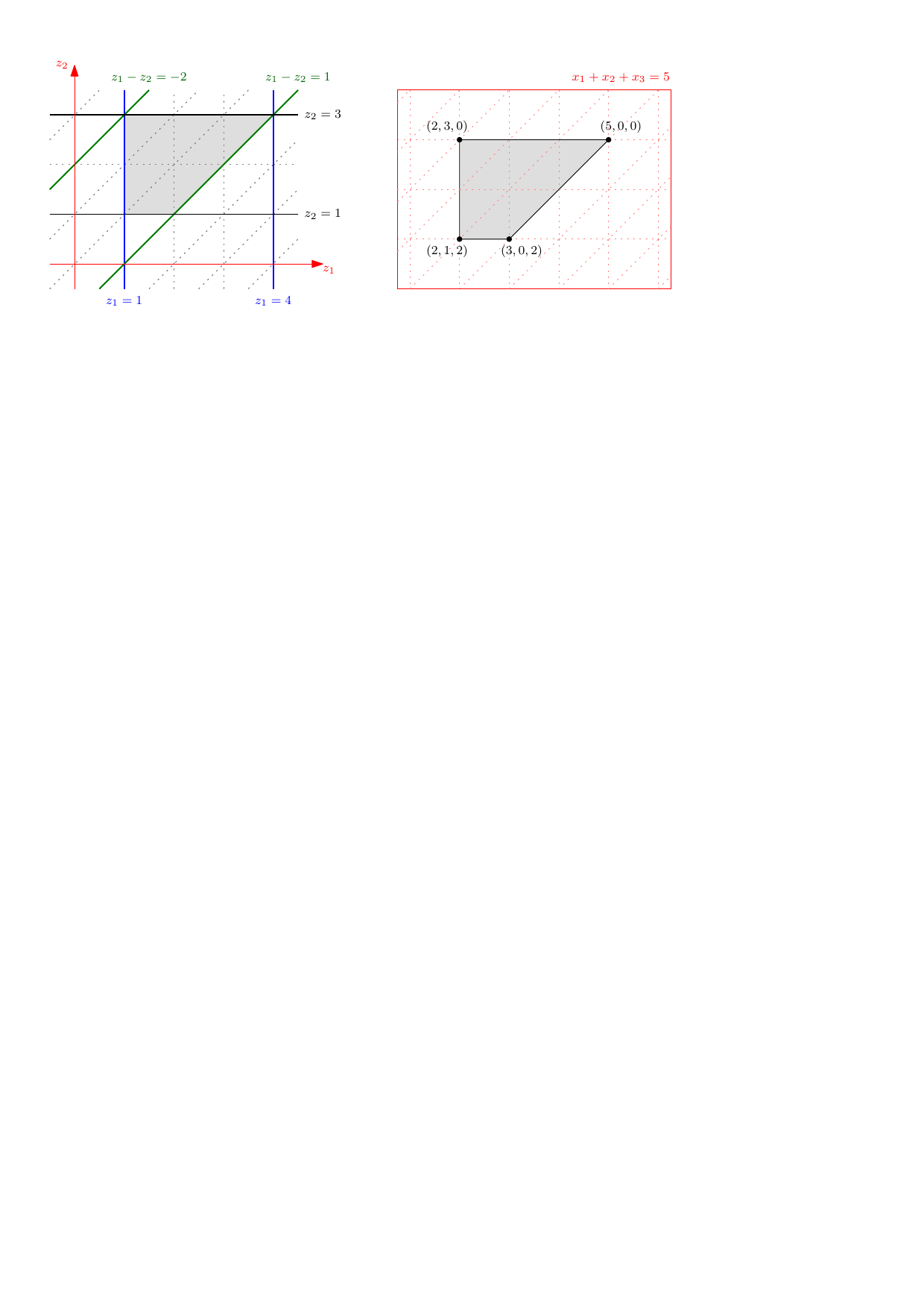}
    \caption{On the left, an alcoved polytope with its explicit $(H,z)$-representation; the dotted lines represent the elements of the affine Coxeter arrangement of type $A_2$. On the right, the image of the polytope under the map $\psi_2$ from \cref{rem:equivalent_defs} after translation by the vector $(1,1,1)$.}
    \label{fig:alcoved_example}
\end{figure}    
\end{example}

For any subdivision, the maximal polytopes with respect to inclusion fully determine the whole subdivision by taking finite intersections. Hence, we make reference to alcoved triangulations by only considering their full-dimensional simplices.

\begin{definition}
    For an alcoved polytope $P$, let $\mathcal{A}(P)$ be the set of maximal simplices (with respect to inclusion) in the alcoved triangulation of $P$. An element $A \in\mathcal{A}(P)$ is an \emph{alcove of $P$.}
\end{definition}

We usually identify the elements of $\mathcal{A}(P)$ with the set of their vertices. With this perspective, Lam and Postnikov \cite{lam2007alcoved} gave a combinatorial description of the alcoves, which we now present. 

\begin{definition}\label{def:sorted-multisets}
    Let $\mathcal{I} = \{I_1,I_2,\ldots,I_k\}$ be a collection of $r$-multisets of $\{1,2,\ldots,n\}$ where for each multiset we assume $I_j = \{I_{j1}\leq I_{j2} \leq \ldots \leq I_{jr}\}$. We say that the collection $\mathcal{I}$ is \emph{sorted} if $$I_{11}\leq I_{21} \leq \ldots I_{k1} \leq I_{21} \leq I_{22} \leq \ldots \leq I_{kr}.$$ 
    Denote by $M_{\mathcal{I}}$ the \emph{matrix associated to the collection of multisets} $\mathcal{I}$  constructed by using the (ordered) multisets as rows. Hence, $\mathcal{I}$ is sorted if the concatenation of columns of $M_{\mathcal{I}}$ from left to right and top to bottom is weakly increasing. 
\end{definition}

\begin{definition}\label{def:multisets_from_vector}
    For an nonnegative integer vector $\vec{a} \in \mathbb{N}^n$ such that $a_1+a_2+\cdots+a_n = r$, let $I_{\vec{a}}$ be the $r$-multiset of $\{1,2,\ldots,n\}$ with $a_i$ elements ``$i$'' for each $i$. For a collection of vectors $A = \{\vec{a}_1,\vec{a}_2,\ldots,\vec{a}_k\} \subseteq \mathbb{N}^n$ such that the coordinates of all of them sum to $r$, define the \emph{collection of multisets of $A$} as $\mathcal{I}_A = \{I_{\vec{a}_1},I_{\vec{a}_2},\ldots,I_{\vec{a}_k}\}.$
\end{definition}

Suppose $P$ has a $(H,x)$-representation in $\mathbb{R}_x^{n}$ such that all points of $P$ have nonnegative coordinates. If this is not the case, by translating $P$ using the vector $m\vec{1} = (m,m,\ldots,m)\in\mathbb{R}_x^n$ for a sufficiently large $m\in\mathbb{Z}$ we obtain an affinely equivalent alcoved polytope with the desired property. Denote by $Z_P = P \cap \mathbb{Z}^{n} \subseteq \mathbb{N}^n$ the set of lattice points of $P$. The following theorem is a reformulation of the characterization of the alcoves of $P$ due to Lam and Postnikov \cite[Theorem 3.1]{lam2007alcoved}.

\begin{theorem}\label{thm:alcoves-sorted-sets}
    Let $P \subseteq \mathbb{R}_x^{n}$ be an alcoved polytope lying in the hyperplane $x_1+x_2+\cdots+x_n = k$ such that all its points have nonnegative coordinates. A simplex with vertices $A = \{\vec{a}_1,\vec{a}_2,\ldots,\vec{a}_n\} \subseteq Z_P$ is an alcove in $\mathcal{A}(P)$ if and only if $\mathcal{I}_A$ is a sorted collection of $k$-multisets. 
\end{theorem}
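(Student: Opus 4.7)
My approach is to reduce to the standard description of alcoves of the affine Coxeter arrangement of type $\tilde{A}_{n-1}$ in $z$-coordinates, and then translate the resulting combinatorics into the $x$-coordinate setup via $\psi_k$ from \cref{rem:equivalent_defs}. The map $\varphi \colon \mathbb{R}^n_x \to \mathbb{R}^{n-1}_z$ is an integral affine isomorphism sending the hyperplane $x_{i+1} + \cdots + x_j = m$ to the hyperplane $z_j - z_i = m$, so it induces a bijection between alcoves of $P$ and alcoves of $\varphi(P)$ that takes lattice points to lattice points. It therefore suffices to characterize the vertex sets of alcoves in the $z$-coordinate picture.

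To do so, describe an arbitrary alcove of $\tilde{A}_{n-1}$ explicitly: every such alcove is the image of the fundamental alcove under an element of the affine Weyl group, and so its vertices form a chain of lattice points $w_0, w_1, \ldots, w_{n-1}$ in $\mathbb{Z}^{n-1}$ whose consecutive differences $w_{j+1} - w_j$ realize the $n$ elementary moves in a cyclic order prescribed by an affine permutation. Pulling back via $\psi_k$ to $\mathbb{R}^n_x$, these become $n$ integer points $\vec{a}_1, \ldots, \vec{a}_n \in \mathbb{N}^n$ lying on the hyperplane $\sum_j x_j = k$, with each consecutive pair $\vec{a}_i, \vec{a}_{i+1}$ differing by $\vec{e}_p - \vec{e}_q$ for some $p \neq q$ in $[n]$.

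Next, I would verify the equivalence with sortedness. Assemble the multisets $I_{\vec{a}_1}, \ldots, I_{\vec{a}_n}$ as rows of $M_\mathcal{I}$. The key observation is that reading the columns of $M_\mathcal{I}$ top to bottom and left to right produces, up to cyclic rotation, a circular word $c_1 c_2 \cdots c_{nk}$ in the alphabet $[n]$ in which each letter appears exactly $k$ times. Such circular words parametrize the alcoves of $\tilde{A}_{n-1}$ on the hyperplane $\sum_j x_j = k$, and sortedness singles out the unique cyclic representative whose concatenation is weakly increasing. The forward direction then follows by unrolling the chain structure of the previous paragraph and checking monotonicity column by column; for the converse one extracts a circular word from a sorted matrix, reconstructs the chain, and verifies that the resulting simplex is indeed an alcove.

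The main technical hurdle is establishing the correspondence between sorted $n \times k$ matrices of lattice points and cyclic words. In particular, one must check that a sorted matrix always yields $n$ affinely independent vertices whose convex hull is a unimodular simplex compatible with the Coxeter arrangement -- no hyperplane $z_i - z_j = m$ slicing through its interior -- and that the condition \emph{all vertices lie in $Z_P$} correctly restricts the characterization to the alcoves of $P$ itself rather than to a larger simplex of the ambient arrangement.
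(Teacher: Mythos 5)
The paper itself gives no proof of \cref{thm:alcoves-sorted-sets}: it is stated as a reformulation of \cite[Theorem 3.1]{lam2007alcoved}, so there is no internal argument to compare yours against. Taken on its own terms, your outline follows the natural route (pass to $z$-coordinates via \cref{rem:equivalent_defs}, describe alcoves of the affine arrangement through the affine Weyl group, and match the resulting chain of lattice points against sortedness), and the first reduction is sound: $\varphi$ carries the hyperplane $x_{i+1}+\cdots+x_j=m$ to $z_j-z_i=m$ and restricts to a lattice-preserving bijection on $\{\sum_j x_j=k\}$. But the execution contains a concrete false step. You assert that reading the columns of $M_{\mathcal{I}}$ top to bottom and left to right gives a circular word of length $nk$ in which each letter of $[n]$ appears exactly $k$ times. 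That would force $\vec{a}_1+\cdots+\vec{a}_n=k\vec{1}$, which fails for essentially every alcove: in \cref{ex:word-simplex-delta-d} (where $n=5$ and $k=6$) the letter $1$ occurs $11$ times in the $5\times 6$ matrix, not $6$. The object that is genuinely balanced is the cyclic sequence of difference vectors $\vec{a}_{i+1}-\vec{a}_i=\vec{e}_{p_i}-\vec{e}_{q_i}$, in which each index of $[n]$ occurs exactly once among the $p_i$ and once among the $q_i$; conflating this with the letter content of the column-reading word breaks the parametrization by circular words on which your argument rests.

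More fundamentally, the entire content of the theorem is the equivalence between sortedness of $\mathcal{I}_A$ and the geometric condition that $\conv(A)$ is a full-dimensional cell of the arrangement, i.e., that the $\vec{a}_\ell$ are affinely independent and no hyperplane $x_{i+1}+\cdots+x_j=m$ passes through the interior of $\conv(A)$. Your final paragraph names precisely this equivalence as ``the main technical hurdle'' and then stops, so what is written is a plan rather than a proof. To close the gap one must actually evaluate, for a sorted collection, the functional $x_{i+1}+\cdots+x_j$ on each vertex (this is the number of entries of the multiset $I_{\vec{a}_\ell}$ lying in $\{i+1,\ldots,j\}$) and show that these $n$ values lie in $\{m,m+1\}$ for a single integer $m$ without all being equal, and conversely that any violation of sortedness produces a separating hyperplane of this form; this column-counting argument is where the theorem lives, and it is absent from the proposal. (The last worry you raise, that requiring the vertices to lie in $Z_P$ correctly cuts down to alcoves of $P$, is by contrast immediate: $P$ is a union of closed cells of the arrangement, so a cell lies in $P$ exactly when its vertices do.)
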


\begin{example}
    Consider again the polytope in $x$-coordinates from \cref{ex:alcoved_polytopes}. Both of the following claims can be verified in \cref{fig:alcoved_example}. 
    
    The convex hull of the set $A = \{(3,2,0),(4,1,0),(3,1,1)\}$ is a unimodular simplex in the hyperplane $x_1+x_2+x_3 = 5$. For this set of vertices, $\mathcal{I}_A = \{ \{1,1,1,2,2\},\{1,1,1,1,2\},\{1,1,1,2,3\} \}$ is the collection of multisets, where the order of the vertices matches the order of the multisets. This collection is sorted if we pick $I_1 = \{1,1,1,1,2\}$, $I_2 = \{1,1,1,2,2\}$, and $I_3 = \{1,1,1,2,3\}$. In \cref{fig:alcoved_example} it can be seen that the set $A$ does determine an alcove of $P$. 

    Now, the convex hull of the set $B = \{ (2,1,2),(3,0,2),(2,2,1) \}$ also gives a unimodular simplex in $x_1+x_2+x_3 = 5$. In this case, $\mathcal{I}_B = \{ \{1,1,2,3,3\},\{1,1,1,3,3\},\{1,1,2,2,3\} \}$ is the collection of multisets. If $\mathcal{I}_B$ were sorted, then $I_1 = \{1,1,1,3,3\}$ since the first two coordinates of all the sets are equal. Now, $\{1,1,2,2,3\}$ has to be put before $\{1,1,2,3,3\}$ but the it conflicts with the chosen $I_1$. Hence, $\mathcal{I}_B$ is not sorted, meaning that it does not correspond to an element in $\mathcal{A}(P).$
\end{example}

\section{Alcoved triangulations of dilated hypersimplices}\label{sec:hypersimplices}

In this section we prove \cref{eq:identity-general-i,eq:identity-i-1}. As mentioned before, the main idea of the proof is to understand their right-hand side geometrically. 

For a $n$-dimensional polytope $P\subset \mathbb{R}^n$, its \emph{volume} is defined by the (Riemann) integral $\text{Vol}(P) = \int_P d\vec{x}$. This amounts to assigning volume $1$ to the standard cube $[0,1]^n$. If we assign volume $1$ to the standard simplex instead, we obtain the \emph{normalized volume} of the polytope, denoted by $\vol(P)$. If $P$ is not $n$-dimensional but still lies in $\mathbb{R}^n$, the volume computations can be performed relative to the linear span of $P$ in order to obtain non-zero volume for objects such as alcoved polytopes with $(H,x)$-representation (see \cite[Section 5.4]{beck2007computing} for the general discussion on \emph{relative volume} and \cite[Theorem 3.2]{lam2007alcoved} for the particular case of alcoved polytopes). In what follows, we omit the adjective "relative" as it plays no significant role for our discussion.  

\begin{remark}
    In view of the previous comments, the normalized volume of an alcoved polytope $P$ is given by $|\mathcal{A}(P)|$.
\end{remark} 

Thus, in order to compute the volume of an alcoved polytope it is enough to understand the set $\mathcal{A}(P)$. We include the following notation to ease the reading of the rest of the section.

\begin{definition}
    Let $P$ be an alcoved polytope. A \emph{labeling} of $\mathcal{A}(P)$ using the elements of a finite set $S$ is a bijection $f\,:\,\mathcal{A}(P) \to S$.
\end{definition}

We now return to the hypersimplices. A famous result from Laplace \cite{laplace1886oeuvres} states that the normalized volume of $\Delta_{i,d}$ is $A(d,i)$. The first triangulation of the hypersimplex that showed this identity combinatorially was constructed by Stanley \cite{Stanley-triang-hypersimplex}. We give another combinatorial proof of this result in \cref{sec:delta-i-d-simplices-labels} by constructing a labeling of $\mathcal{A}(\Delta_{i,d})$ with permutations in $\mathfrak{S}_d$ with $i-1$ descents. Finally, the right-hand side of \cref{eq:identity-general-i} can be rewritten as $$r^d A(d,i) = r^d \vol(\Delta_{i,d}) = \vol(r\Delta_{i,d})$$ where $r\Delta_{i,d}$ is the dilation of the hypersimplex by a factor of $r$. 

In the rest of the section we first show \cref{eq:identity-i-1} by constructing two different labelings of $\mathcal{A}(r\Delta_{1,d})$ and then use those ideas to construct the corresponding labelings for $\mathcal{A}(r\Delta_{i,d})$ that show \cref{eq:identity-general-i}.

\subsection{The dilated standard simplex}\label{sec:delta-1-d-simplices-labels}

We want to describe a labeling of the alcoves of $$r\Delta_{1,d} = \left\{ \vec{x}\in\mathbb{R}_x^{d+1} \;:\; 0\leq x_1\,,\,x_2\,,\,\ldots\,,\,x_{d+1} \leq r \quad \text{and} \quad x_1+x_2+\cdots+x_{d+1} = r \right\}$$ using words in $[r]^d$ and pairs in $\bigcup_{j=1}^d \mathfrak{C}(r-1,d+1,r-j)\times \mathfrak{A}(d,j)$. From the description of the polytope it is clear that that the lattice points of $r\Delta_{1,d}$ are the weak compositions of $r$ with $d+1$ parts. 

\subsubsection{Labeling of the alcoves with words}

We reinterpret the sorted sets $\mathcal{I}_A$ from \cref{thm:alcoves-sorted-sets} using words. 

\begin{definition}
    Let $\mathcal{I} = \{I_1,I_2,\ldots,I_{k}\}$ be a sorted collection of different $r$-multisets of $\{1,2,\ldots,n\}.$ The \emph{decorated matrix} $\widetilde M_\mathcal{I}$ is constructed as follows. Arrange the numbers of $M_\mathcal{I}$ in a $k\times r$ grid and then (using matrix coordinates)
    \begin{itemize}
        \item[(a)] if $I_{ab}<I_{(a+1)b}$ mark the edge between the cells $(a,b)$ and $(a+1,b)$ in the grid, and
        \item[(b)] if $I_{nb}<I_{1(b+1)}$ mark the bottom edge of cell $(n,b)$ in the grid. 
    \end{itemize}
\end{definition}

\begin{example}\label{ex:decorated-matrix}
    Fix $n=8$ and $r = 6$. Consider the set of points 
    \begin{multline*}
    A = \{\vec{a}_1,\vec{a}_2,\vec{a}_3,\vec{a}_4,\vec{a}_5\} = \{(2,1,0,1,1,0,0,1),(2,0,1,1,1,0,0,1),\\(1,1,0,2,0,1,0,1),(1,1,0,1,1,1,0,1),(1,1,0,1,1,0,1,1)\}\subseteq \mathbb{R}^8
    \end{multline*}
    The decorated matrix $\widetilde M_{\mathcal{I}}$ for $\mathcal{I} = \mathcal{I}_A = \{I_{\vec{a}_1},I_{\vec{a}_2},I_{\vec{a}_3},I_{\vec{a}_4},I_{\vec{a}_5}\}$ is 
    
    \begin{center}
        \begin{tikzpicture}[scale = 0.7]
            \def \w{1};
            \def \h{1};
            \def \r{0.3};
            \def \rows{5};
            \def \cols{6};
            \foreach \i in {0,...,\rows}
            {
            \draw[gray!50] (0,\i*\h)--(\w*\cols,\i*\h);
            }
            \foreach \i in {0,...,\cols}
            {
            \draw[gray!50] (\w*\i,0)--(\w*\i,\rows*\h);
            }
            \foreach \xx\yy\c in {0/4/1,0/3/1,0/2/1,0/1/1,0/0/1,
                                  1/4/1,1/3/1,1/2/2,1/1/2,1/0/2,
                                  2/4/2,2/3/3,2/2/4,2/1/4,2/0/4,
                                  3/4/4,3/3/4,3/2/4,3/1/5,3/0/5,
                                  4/4/5,4/3/5,4/2/6,4/1/6,4/0/7,
                                  5/4/8,5/3/8,5/2/8,5/1/8,5/0/8
                                  }
            {
            \node at (\w*.5+\w*\xx,\h*.5+\h*\yy) {\c};
            }
            \foreach \xx\yy in {1/3,2/4,2/3,3/2,4/3,4/1,4/0}
            {
            \draw[red,ultra thick] (\xx,\yy) --  (\xx+1,\yy);
            }
            %\node at (2.5,-5.5) {$(N_0,Q_0)$};
        \end{tikzpicture}
    \end{center}
\end{example}

We now examine the case of sorted sets arising from the alcoves of the dilated standard simplex. The following lemma gives conditions on the decorated matrices that appear in this case. 

\begin{lemma}\label{lem:properties-marked-matrix-alcoves-dilated-simplex}
    Let $A = \{\vec{a}_1,\vec{a}_2,\ldots,\vec{a}_{d+1}\}$ be the set of vertices of an alcove of $r\Delta_{1,d}$ and let $\mathcal{I} = \mathcal{I}_A$ be the associated sorted collection of $r$-multisets. Then the decorated matrix $\widetilde M_{\mathcal{I}}$ has the following properties:
    \begin{enumerate}
        \item For each $1\leq i\leq d$, there is a unique mark between rows $i$ and $i+1$, and 
        \item there are no marks in the bottom part of the matrix. 
    \end{enumerate}
\end{lemma}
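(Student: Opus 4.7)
The plan is to count the total number of marks in $\widetilde M_{\mathcal{I}}$ in two different ways and observe that the two bounds collide. First I would unify both types of marks: reading the entries of $M_{\mathcal{I}}$ column by column, top to bottom and left to right, yields the concatenated column-sequence, which is weakly increasing precisely because $\mathcal{I}$ is sorted. A strict inequality between two consecutive entries of this sequence corresponds to exactly one mark — a type (a) mark when the two entries lie in the same column, and a type (b) mark when they lie at the boundary between two consecutive columns. Hence the total number of marks equals the number of strict inequalities in the concatenation, which for a weakly increasing sequence is exactly one less than the number of distinct values that appear.

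For the upper bound, the hypothesis $P = r\Delta_{1,d} \subset \mathbb{R}^{d+1}_x$ forces every entry of $M_{\mathcal{I}}$ to lie in $\{1,2,\ldots,d+1\}$, so at most $d+1$ distinct values can appear, and the total number of marks is at most $d$. For the matching lower bound, the $d+1$ vertices of the alcove are distinct points, so the rows of $M_{\mathcal{I}}$ are pairwise distinct multisets. Sortedness implies in particular that each column of $M_{\mathcal{I}}$ is weakly increasing from top to bottom, so any two consecutive rows $I_i \neq I_{i+1}$ must satisfy $I_{ij} < I_{(i+1)j}$ in at least one column $j$, producing at least one type (a) mark in each of the $d$ inter-row gaps. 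This gives at least $d$ marks in total.

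Combining the two bounds forces equality: the total number of marks is exactly $d$, each inter-row gap contains exactly one type (a) mark, and no type (b) mark can occur, which is precisely the statement of the lemma. The only place the specific polytope $r\Delta_{1,d}$ enters is to cap the value range at $d+1$; everything else is purely combinatorial. I do not anticipate any substantial obstacle beyond carefully spelling out the one-to-one correspondence between strict inequalities in the concatenated sequence and marks in $\widetilde M_{\mathcal{I}}$, which is immediate from the definition.
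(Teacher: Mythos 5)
Your proof is correct and follows essentially the same strategy as the paper's: a lower bound of $d$ marks from the pairwise distinctness of the rows (one type (a) mark per adjacent pair) and an upper bound of $d$ marks from the fact that all entries lie in $\{1,\ldots,d+1\}$, forcing equality. Your version merely makes the counting more explicit by identifying marks with strict increases in the weakly increasing concatenated column-sequence, which is a clean way to phrase the paper's ``otherwise we would obtain an entry greater than $d+1$'' step.
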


\begin{proof}
    First, note that $\vec{a}_i \neq \vec{a}_j$ if $i\neq j$ since $A$ is the set of vertices of a simplex. Hence, the set $\mathcal{I}_A$ consists of $d+1$ different $r$-multisets of the set $\{1,2,\ldots,d+1\}$. 
    Since all of the multisets are distinct, there is at least one mark between each pair of adjacent rows in $\mathcal{M}_{\mathcal{I}_A}$. Moreover, since the maximum element of each multiset is at most $d+1$, the decorated matrix has at most $d$ marks. Otherwise, we would obtain an entry in the matrix that is greater than $d+1.$ Combining these two facts we obtain the lemma.  
\end{proof}

\cref{lem:properties-marked-matrix-alcoves-dilated-simplex} allows us to define a labeling of $\mathcal{A}(r\Delta_{1,d})$ by reading the positions of the marks in each of the rows. 

\begin{definition}
    Let $\word_1 : \mathcal{A}(r\Delta_{1,d}) \to [r]^d\;$ be the map defined as follows. If $A$ is the set of vertices of an alcove of $r\Delta_{1,d}$ with associated collection of multisets $\mathcal{I} = \mathcal{I}_A$ then $\word_1(A)$ is obtained by reading the column label of the marks of $\widetilde M_\mathcal{I}$ from top to bottom. 
\end{definition}

\begin{example}\label{ex:word-simplex-delta-d}
    We give an example where $d = 4$ and $r = 6$. The set of points 
    \begin{equation*}
    A = \{(3,1,1,0,1),(2,2,1,0,1),(2,2,0,1,1),(2,1,1,1,1),(2,1,1,0,2)\}
    \end{equation*}
    defines an alcove of $6\Delta_{1,4}\subseteq \mathbb{R}^5_x$. The decorated matrix in this case is 
    \begin{center}
        \begin{tikzpicture}[scale = 0.7]
            \def \w{1};
            \def \h{1};
            \def \r{0.3};
            \def \rows{5};
            \def \cols{6};
            \foreach \i in {0,...,\rows}
            {
            \draw[gray!50] (0,\i*\h)--(\w*\cols,\i*\h);
            }
            \foreach \i in {0,...,\cols}
            {
            \draw[gray!50] (\w*\i,0)--(\w*\i,\rows*\h);
            }
            \foreach \xx\yy\c in {0/4/1,0/3/1,0/2/1,0/1/1,0/0/1,
                                  1/4/1,1/3/1,1/2/1,1/1/1,1/0/1,
                                  2/4/1,2/3/2,2/2/2,2/1/2,2/0/2,
                                  3/4/2,3/3/2,3/2/2,3/1/3,3/0/3,
                                  4/4/3,4/3/3,4/2/4,4/1/4,4/0/5,
                                  5/4/5,5/3/5,5/2/5,5/1/5,5/0/5
                                  }
            {
            \node at (\w*.5+\w*\xx,\h*.5+\h*\yy) {\c};
            }
            \foreach \xx\yy in {2/4,3/2,4/3,4/1}
            {
            \draw[red,ultra thick] (\xx,\yy) --  (\xx+1,\yy);
            }
        \end{tikzpicture}
    \end{center}
    and then $\word_1(A) = 3\,5\,4\,5\in [6]^4.$
\end{example}

\begin{theorem}\label{thm:bijection-alcoves-words}
    The map $$\word_1 \; : \; \mathcal{A}(r\Delta_{1,d}) \to [r]^d$$ is a bijection. 
\end{theorem}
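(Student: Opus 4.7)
The plan is to prove bijectivity by constructing an explicit inverse. The driving observation is that the constraints of \cref{lem:properties-marked-matrix-alcoves-dilated-simplex} (exactly one mark between every pair of consecutive rows and no bottom marks) combined with the sorted condition leave no freedom for the entries of the decorated matrix: starting from value $1$ at position $(1,1)$ and reading in column-major order, the value must remain constant except that it increases by $1$ each time the traversal crosses a mark, ultimately reaching $d+1$ after all $d$ marks are crossed. Consequently the matrix, and hence the alcove, is completely determined by the column positions of its marks, i.e.\ by its word.

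Concretely, for $w = w_1 w_2 \cdots w_d \in [r]^d$ I would define a $(d+1) \times r$ matrix $M_w$ whose $(i,c)$-entry is
\[
v(i, c) \;=\; 1 \;+\; |\{j : w_j < c\}| \;+\; |\{j < i : w_j = c\}|,
\]
which equals one plus the number of marks crossed in the column-major traversal from $(1,1)$ to $(i, c)$. Each row of $M_w$ is then an $r$-multiset of $\{1, 2, \ldots, d+1\}$ and corresponds, via \cref{def:multisets_from_vector}, to a weak composition $\vec{a}_i$ of $r$ into $d+1$ nonnegative parts, hence a lattice point of $r\Delta_{1,d}$. Setting $\word_1^{-1}(w) := \{\vec{a}_1, \ldots, \vec{a}_{d+1}\}$ gives a candidate inverse.

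The required verifications are routine. First, the maximum entry is $v(d+1, r) = 1 + d = d+1$, so all entries lie in $\{1, \ldots, d+1\}$. Second, the sorted condition holds within columns, since $v(i+1, c) - v(i, c)$ equals $1$ if $w_i = c$ and $0$ otherwise, and it holds between columns, since $v(d+1, c) = v(1, c+1) = 1 + |\{j : w_j \leq c\}|$. Third, the $d+1$ rows of $M_w$ are pairwise distinct because consecutive rows differ in column $w_i$. Thus \cref{thm:alcoves-sorted-sets} guarantees that $\{\vec{a}_1, \ldots, \vec{a}_{d+1}\}$ is an alcove of $r\Delta_{1,d}$, and by construction its decorated matrix has its unique mark between rows $i$ and $i+1$ in column $w_i$, so $\word_1$ sends this alcove back to $w$. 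Conversely, given any alcove $A$ with $\word_1(A) = w$, the uniqueness argument above forces $\widetilde{M}_{\mathcal{I}_A} = M_w$ and hence $A = \word_1^{-1}(w)$. The main obstacle is simply the bookkeeping required to translate the column-major reading of the marked matrix into the closed formula for $v(i,c)$; once that is set up, everything else follows from \cref{lem:properties-marked-matrix-alcoves-dilated-simplex} and the Lam–Postnikov characterization of alcoves.
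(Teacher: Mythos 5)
Your proof is correct and follows essentially the same route as the paper: both invert $\word_1$ by reconstructing the decorated matrix from the positions of the marks, the only difference being that you give a closed formula $v(i,c)$ for the entries where the paper uses a recursive filling rule. Your explicit argument that the entries of any alcove's decorated matrix are forced by its marks (hence that $\word_1$ is injective) is a point the paper leaves implicit, so this is a welcome addition rather than a divergence.
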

\begin{proof}
    \cref{lem:properties-marked-matrix-alcoves-dilated-simplex} shows that $\word_1$ is a well-defined map, that is, the image is in the set $[r]^d$. 
    To show that it is a bijection, we give an inverse. Given a word $w \in [r]^d$, we construct a decorated matrix as follows: start by considering a $(d+1)\times r$ grid with a mark in between rows $i$ and $i+1$ in column $w_i$ for $1\leq i \leq d$. Fill the entries of the matrix by setting a ``$1$'' in position $(1,1)$ and using the following procedure: 
    \begin{itemize}
        \item For $1\leq i \leq d$, if the cell $(i,j)$ has label ``$k$'' and there is no mark right below it, label the cell $(i+1,j)$ with ``$k+1$''. 
        \item If the cell $(d+1,j)$ has label ``$k$'', label the cell $(1,j+1)$ with ``$k$''.
    \end{itemize}
    Say this produces the matrix $M$. If $M_i$ denotes the $i$-th row of $M$, then considered as multisets, $\{M_1,M_2,\ldots,M_{d+1}\}$ is a collection of sorted $r$-multisets of $\{1,2,\ldots,d+1\}$. Let the vector $\vec{a}_i$ be the indicator vector of $M_i$. Since $M_i$ has size $r$, the sum of the entries of $\vec{a}_i$ is $r$. Then the set $A = \{\vec{a}_1,\vec{a}_2,\ldots,\vec{a}_{d+1}\}$ is an alcove of a polytope in $\mathbb{R}_x^{d+1}$ lying in the hyperplane $x_1+x_2+\cdots+x_{d+1} = r$. Since all $\vec{a}_i$ have coordinates adding up to $r$, all these integer points are part of $r\Delta_{1,d}$. Hence, they are the vertices of an alcove of this polytope. 
\end{proof}

\subsubsection{Labeling of the alcoves with pairs of compositions and permutations}\label{sec:labeling_pair_case_1}

We start by associating a composition to each collection of nonnegative integer vectors $V$. The idea behind this composition is that it is the additive inverse of the maximal translation that $\conv(V)$ allows so that the polytope remains in the positive orthant. 

\begin{definition}\label{def:composition-set-of-vectors}
    Let $V=\{\vec{v}_1,\vec{v}_2,\ldots,\vec{v}_{m}\}\subseteq\mathbb{N}^{d+1}$ be a collection of vectors with nonnegative integer coordinates. Define $\comp(V) = (c_1,c_2,\ldots,c_{d+1})$ to be the composition with parts $c_k = \min\left \{(\vec{v}_{j})_{k}\,|\,j\in[m]\right \}$ where $(\vec{v}_j)_k$ denotes the $k$-th entry of the vector $\vec{v}_j.$
\end{definition}

If we restrict to collections of vertices of alcoves of a dilated standard simplex, we can give a description of the associated composition using the decorated matrix of the collection. 

\begin{lemma}\label{lem:comp-A-from-decorated-matrix}
    Let $A=\{\vec{a}_1,\vec{a}_2,\ldots,\vec{a}_{k}\}\in\mathcal{A}(r\Delta_{1,d})$. Then $\comp(A) = (c_1,c_2,\ldots,c_{d+1})$ satisfies the following in terms of the marks of $M_{\mathcal{I}_A}$:
    \begin{enumerate}
        \item $c_1 = k$ if the first mark is in column $k+1$,
        \item Let $2\leq i \leq d$. Suppose the $i$-th mark is in column $b_i$. Then $$c_{i} = \begin{cases}b_{i}-b_{i-1}-1 \quad &\text{if the $i$-th mark is higher than the $(i-1)$-th mark}\\ b_{i}-b_{i-1} \quad & \text{if the $i$-th mark is higher than the $(i-1)$-th mark}\\\end{cases}$$
        \item $c_{d+1} = \ell$ if the last mark is in column $r-\ell$.  
    \end{enumerate}
\end{lemma}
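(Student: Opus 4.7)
The plan is to track how the vertex $\vec{a}_s$ evolves as $s$ ranges over the rows of the decorated matrix $\widetilde{M}_{\mathcal{I}_A}$. By \cref{lem:properties-marked-matrix-alcoves-dilated-simplex} and the sortedness of $\mathcal{I}_A$, consecutive rows $M_s$ and $M_{s+1}$ agree except in the column $\beta_s$ of the unique mark between them, where $M_{s+1}[\beta_s] = M_s[\beta_s] + 1$. Translating this to the vertex vectors yields the chain relation $\vec{a}_{s+1} - \vec{a}_s = \vec{e}_{u_s+1} - \vec{e}_{u_s}$ with $u_s := M_s[\beta_s]$, so the $d+1$ vertices form a chain of unit ``up steps'' that decrement coordinate $u_s$ and increment coordinate $u_s+1$.

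The heart of the proof is to identify $u_s$ with mark-position data. Using the column-major fill from the proof of \cref{thm:bijection-alcoves-words}, the value at cell $(s, \beta_s)$ equals one plus the number of marks crossed before reaching $(s, \beta_s)$ in that walk. Consequently, if the $d$ marks are reindexed by column (breaking ties by smaller upper row) to give columns $b_1 \le b_2 \le \cdots \le b_d$, then $u_s$ is precisely the position in this reindexing of the mark between rows $s$ and $s+1$. Equivalently, $u_s = \pi(s)$ where $\pi$ is the permutation from row order to column order of the marks.

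Fix $k \in \{1,\ldots,d+1\}$. With this identification, the sequence $(\vec{a}_s)_k$ for $s=1,\ldots,d+1$ is constant except for a $+1$ at step $s = \pi^{-1}(k-1)$ and a $-1$ at step $s = \pi^{-1}(k)$, whenever these indices lie in $\{1,\ldots,d\}$. Counting the occurrences of $k$ along row $M_1$ via the same fill gives $(\vec{a}_1)_k = b_k - b_{k-1}$, with the conventions $b_0 := 0$ and $b_{d+1} := r$. The minimum of this one-dimensional walk equals $(\vec{a}_1)_k - 1$ precisely when the $-1$ step precedes the $+1$, i.e.\ when $\pi^{-1}(k) < \pi^{-1}(k-1)$; this is exactly the condition that the $k$-th mark in column order sits in a smaller-indexed (physically higher) row than the $(k-1)$-th mark. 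The three items of the statement then follow: at $k=1$ only a $-1$ step can occur, giving $c_1 = b_1 - 1$; at $k = d+1$ only a $+1$ step can occur, giving $c_{d+1} = r - b_d$; and at an intermediate $k$ we recover $c_k = b_k - b_{k-1}$ when the $k$-th mark is not higher than the $(k-1)$-th, and $c_k = b_k - b_{k-1} - 1$ when it is. The main obstacle is the walk-order identification $u_s = \pi(s)$; everything else is a direct analysis of a walk on $\mathbb{Z}$.
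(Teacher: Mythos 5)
Your proof is correct, but it takes a genuinely different route from the paper's. The paper's argument is a two-line shortcut: it observes that $c_j = \lfloor N_j/(d+1)\rfloor$, where $N_j$ is the total number of entries ``$j$'' in $M_{\mathcal{I}_A}$, because the $j$-th coordinates of the $d+1$ vertices take only two consecutive values; the translation of $N_j$ into mark-column data is left implicit. You instead track each coordinate as a one-dimensional walk along the chain of vertices, using the relation $\vec{a}_{s+1}-\vec{a}_s = \vec{e}_{u_s+1}-\vec{e}_{u_s}$ and the identification $u_s = \pi(s)$ of the fill value at a mark with that mark's index in column-reading order, then read off the minimum from whether the $-1$ step precedes the $+1$ step. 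What the paper's approach buys is brevity; what yours buys is that the mark-position formulas in items 1--3 fall out directly rather than needing a separate computation of $N_j$ in terms of the columns $b_{i-1},b_i$ and the relative heights of the marks, and the chain relation you establish is exactly the adjacency mechanism reused later in the proof of \cref{thm:dual_graph_alcoved_dilated_standard_simplex}. Your case analysis also correctly disambiguates item 2 of the statement, whose two branches both currently read ``higher'' (one should read ``lower''), matching the convention used in \cref{prop:properties-comp-A-and-permutation-A}. The only point I would tighten is the verification that $(\vec{a}_1)_k = b_k - b_{k-1}$ (with $b_0=0$, $b_{d+1}=r$): this follows because the fill value at cell $(1,c)$ is one plus the number of marks in columns strictly less than $c$ (no bottom marks occur by \cref{lem:properties-marked-matrix-alcoves-dilated-simplex}), so the columns $c$ whose top entry equals $k$ are exactly those with $b_{k-1} < c \leq b_k$; you assert this count but do not spell it out.
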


\begin{proof}
    The lemma follows from the fact that $c_j$ is equal to $\left\lfloor \frac{N_j}{d+1} \right\rfloor$ where $N_j$ is the number of entries ``$j$'' in the matrix $M_{\mathcal{I}_A}$. Indeed, if $N_j = k(d+1)+\ell$ for some $k,\ell\in\mathbb{N}$ with $\ell < d+1$, then each $\vec{a}_i$ has at least $k$ coordinates equal to $j$, and some of them have exactly $j$ coordinates equal to $j$. Hence $c_j = k.$
\end{proof}

Now we associate a permutation to each of the alcoves of the dilated hypersimplices. 

\begin{definition}\label{def:permutation-alcove-dilated-simplex}
    Let $A=\{\vec{a}_1,\vec{a}_2,\ldots,\vec{a}_{d+1}\}\in\mathcal{A}(r\Delta_{1,d})$. Define $\sigma_A \in S_{d}$ to be the permutation such that its one-line notation of is the word obtained from $\widetilde M_{\mathcal{I}_A}$ by recording the position of the marks reading the columns from top to bottom and from left to right in the matrix.
\end{definition}    

\begin{example}\label{ex:comp-permutation-simplex-delta-d}
    For the alcove $A\in\mathcal{A}(6\Delta_{1,4})$ from \cref{ex:word-simplex-delta-d}, $\comp(A) = 2\,1\,0\,0\,1$, and $\sigma_A = 1\,3\,2\,4$ in one-line notation. 
\end{example}

\begin{prop}\label{prop:properties-comp-A-and-permutation-A}
    Let $A=\{\vec{a}_1,\vec{a}_2,\ldots,\vec{a}_{d+1}\}\in\mathcal{A}(r\Delta_{1,d})$. Then $\comp(A) \in \mathfrak{C}(r-1,d+1,r-j)$ for some $j\in\{1,2,\ldots,d\}$ and in this case $\sigma_A \in \mathfrak{A}(d,j)$. 
\end{prop}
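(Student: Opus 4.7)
The plan is to express each part of $\comp(A) = (c_1, \ldots, c_{d+1})$ via the column positions of the $d$ marks of $\widetilde M_{\mathcal{I}_A}$ (read in the order that defines $\sigma_A$) and then telescope the sum, so that the descents of $\sigma_A$ appear exactly as the $-1$ corrections in \cref{lem:comp-A-from-decorated-matrix}.

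First I would enumerate the $d$ marks---their count is exactly $d$ by \cref{lem:properties-marked-matrix-alcoves-dilated-simplex}---in the reading order of \cref{def:permutation-alcove-dilated-simplex}: column by column from left to right, and top to bottom within each column. Writing $b_1 \le b_2 \le \cdots \le b_d$ for the columns of the marks in this order, $\sigma_A(i)$ is by definition the row index of the $i$-th mark. The within-column reading then forces two marks in the same column ($b_i = b_{i-1}$) to produce an ascent, while for $b_i > b_{i-1}$ a descent at position $i-1$ of $\sigma_A$ occurs if and only if the $i$-th mark lies strictly higher (smaller row index) than the $(i-1)$-th. This identifies the two branches of \cref{lem:comp-A-from-decorated-matrix} with the presence or absence of a descent: the $-1$ branch corresponds to a descent.

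Next, invoking \cref{lem:comp-A-from-decorated-matrix} and summing, the differences $b_i - b_{i-1}$ telescope to $b_d - b_1$, so
\begin{equation*}
    \sum_{k=1}^{d+1} c_k = (b_1 - 1) + (b_d - b_1) - \des(\sigma_A) + (r - b_d) = r - 1 - \des(\sigma_A).
\end{equation*}
Setting $j := \des(\sigma_A) + 1$ yields $\sum_k c_k = r - j$ with $j \in \{1, \ldots, d\}$, since $\sigma_A \in \mathfrak{S}_d$ has at most $d - 1$ descents. The bounds $1 \le b_1 \le \cdots \le b_d \le r$ also force $0 \le c_k \le r - 1$ for every $k$, so $\comp(A) \in \mathfrak{C}(r-1, d+1, r-j)$; and $\sigma_A \in \mathfrak{A}(d,j)$ by the very choice of $j$.

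The main obstacle is the dictionary in step two: \cref{lem:comp-A-from-decorated-matrix} as printed has both of its cases labeled ``higher than'' (an apparent typo), and the plan only goes through after reading the second case as ``not higher than'' (equivalently ``lower than'', since distinct marks occupy distinct rows). Once this identification is made, both claims of the proposition follow from the short telescoping calculation above.
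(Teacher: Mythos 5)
Your proof is correct and takes essentially the same route as the paper's: the same telescoping of the parts of $\comp(A)$ via \cref{lem:comp-A-from-decorated-matrix}, the same substitution $j = \des(\sigma_A)+1$, and the same identification of the $-1$ corrections with the descents of $\sigma_A$. You are also right that the second case of \cref{lem:comp-A-from-decorated-matrix} contains a typo and should read ``lower'' rather than ``higher''; the paper's own proof implicitly uses this corrected reading, and your explicit check that two marks in the same column are read top-to-bottom (hence give an ascent and no $-1$) is a detail the paper leaves tacit.
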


\begin{proof}
    Let $\mathcal{I} = \mathcal{I}_A$. From the definition of $\comp(A)$ we observe that it has $d+1$ parts and each of them is at most $r-1$. We now give a characterization of the integer $j$ for which $\comp(A) \in \mathfrak{C}(r-1,d+1,r-j)$ in terms of the decorated matrix $\widetilde{M}_\mathcal{I}$. Using the notation from \cref{lem:comp-A-from-decorated-matrix}, and letting $N$ be the number of marks that are lower than the next one, 
    \begin{align*}
        \sum_{i=1}^{d+1} c_i &= k + \sum_{i=1}^{d} (b_{i}-b_{i-1}) - N + \ell \\
        &= (b_1-1) + b_d - b_1 - N + (r - b_d) = r - (N+1).
    \end{align*}
    Therefore, $j= N + 1$. Now, note that each mark that is lower than the following one contributes to a descent of $\sigma_A$ according to the definition of the permutation. Hence $ j = N + 1 = \des(\sigma_A) + 1$ meaning that $\sigma_A \in \mathfrak{A}(d,j)$ as desired. 
\end{proof}

With these two objects we can construct the following labeling. We prove that this is actually a bijection at the start of \cref{sec:delta-i-d-simplices-labels} after discussing alcoves of hypersimplices in detail.

\begin{theorem}\label{thm:bijection-alcoves-pairs}
    The map $$\pair_1\,:\,\mathcal{A}(r\Delta_{1,d})\to \bigcup_{j=1}^d \mathfrak{C}(r-1,d+1,r-j)\times \mathfrak{A}(d,j)$$ given by $\pair_1(A) = (\comp(A),\sigma_A)$ is a bijection.
\end{theorem}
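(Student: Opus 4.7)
The plan is to leverage the fact that $\word_1$ from \cref{thm:bijection-alcoves-words} is already a bijection $\mathcal{A}(r\Delta_{1,d})\to [r]^d$ and to produce an explicit inverse of $\pair_1$ by reversing the decorated-matrix construction that underlies both labelings. Well-definedness of $\pair_1$ (its image lying in the stated codomain) is already guaranteed by \cref{prop:properties-comp-A-and-permutation-A}, so only bijectivity remains, and exhibiting a two-sided inverse is enough.

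I would describe the inverse map as follows. Given a pair $(c,\sigma)$ with $c=(c_1,\ldots,c_{d+1})\in\mathfrak{C}(r-1,d+1,r-j)$ and $\sigma\in\mathfrak{A}(d,j)$, define column positions recursively by
\begin{equation*}
    b_1 := c_1+1, \qquad b_i := b_{i-1}+c_i+\epsilon_{i-1} \quad \text{for } 2\leq i\leq d,
\end{equation*}
where $\epsilon_{i-1}=1$ if $\sigma$ has a descent at position $i-1$ (i.e.\ $\sigma(i-1)>\sigma(i)$) and $\epsilon_{i-1}=0$ otherwise. On a $(d+1)\times r$ grid I would place a mark between rows $\sigma(i)$ and $\sigma(i)+1$ in column $b_i$ for each $i\in[d]$, and then fill in the entries of the grid using the filling procedure from the proof of \cref{thm:bijection-alcoves-words}. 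The rows of the resulting matrix, read as multisets of $\{1,\ldots,d+1\}$ and translated to indicator vectors in $\mathbb{N}^{d+1}$, form the vertex set $A$ of the candidate preimage.

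The verification splits into three steps. First, a telescoping computation using $\sum_{i=1}^{d+1} c_i = r-j$ and $\sum_{i=1}^{d-1}\epsilon_i = \des(\sigma) = j-1$ yields $b_d = r-c_{d+1}$, from which the chain $1 \leq b_1 \leq b_2 \leq \cdots \leq b_d \leq r$ follows, with $b_{i-1}<b_i$ holding strictly exactly at the descents of $\sigma$ (so marks sharing a column appear in increasing row index order, matching the tie-breaking convention in \cref{def:permutation-alcove-dilated-simplex}). Second, this mark placement satisfies the two conditions of \cref{lem:properties-marked-matrix-alcoves-dilated-simplex}, and the filling procedure produces a matrix whose rows are a sorted collection of $r$-multisets of $\{1,\ldots,d+1\}$; so by \cref{thm:alcoves-sorted-sets} the corresponding indicator vectors form an alcove $A\in\mathcal{A}(r\Delta_{1,d})$. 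Third, re-reading $\comp(A)$ and $\sigma_A$ off this matrix via \cref{lem:comp-A-from-decorated-matrix} and \cref{def:permutation-alcove-dilated-simplex} recovers exactly $(c,\sigma)$, and conversely running any alcove through $\pair_1$ and then back through the recursion reproduces its decorated matrix.

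The main obstacle is the bookkeeping around the $\epsilon_{i-1}$ corrections: one must check that the formulas of \cref{lem:comp-A-from-decorated-matrix} invert the recursion cleanly, and that the three constraints on the pair $(c,\sigma)$ — $c_i\geq 0$, $\sum c_i = r-j$, and $\des(\sigma)=j-1$ — conspire to force $(b_1,\ldots,b_d)$ into the correct weakly increasing sequence in $[r]^d$ (the upper bound $c_i\leq r-1$ is automatic once $j\geq 1$). Viewed through $\word_1$, what is really being verified is that $\pair_1\circ\word_1^{-1}$ coincides with the standard bijection sending a word in $[r]^d$ to the pair consisting of its weakly increasing rearrangement and its stable sorting permutation, whose descents are exactly the strict-increase positions that appear in that rearrangement.
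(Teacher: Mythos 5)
Your argument is correct, and it reaches the inverse by a somewhat different route than the paper. The paper defers this proof until after \cref{thm:alcoves-hypersimplex-eulerian} precisely so that it can define the inverse geometrically: $\alc_1(\vec{c},\sigma) = \vec{c} + A^{(j)}_\sigma$, i.e.\ take the alcove of the hypersimplex $\Delta_{j,d}$ labeled by $\sigma$ and translate it by the composition vector, checking in the decorated matrix that this amounts to inserting $(d+1)c_j$ entries ``$j$'' after the $(j-1)$th mark. You instead build the decorated matrix of the preimage directly, computing the column positions $b_i$ by the recursion that inverts \cref{lem:comp-A-from-decorated-matrix} and then invoking the filling procedure from \cref{thm:bijection-alcoves-words}; the underlying matrix produced is the same, but your version is self-contained within the machinery of Section 3.1 and does not need the volume-of-hypersimplices bijection, while the paper's version buys the clean geometric picture that every alcove of $r\Delta_{1,d}$ is a lattice translate of a unique alcove of some $\Delta_{j,d}$. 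Your bookkeeping is essentially right: the constraints $c_i\geq 0$, $\sum c_i = r-j$, $\des(\sigma)=j-1$ do force $1\leq b_1\leq\cdots\leq b_d = r-c_{d+1}\leq r$, and equal consecutive columns can only occur at ascents of $\sigma$, which is exactly the compatibility with the top-to-bottom reading convention. One small imprecision: the strict inequality $b_{i-1}<b_i$ does not hold \emph{exactly} at the descents of $\sigma$ (it also holds at any ascent with $c_i>0$); what your argument actually uses, and what is true, is the converse implication that $b_{i-1}=b_i$ forces an ascent. This does not affect the validity of the proof.
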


\noindent From \cref{thm:bijection-alcoves-words,thm:bijection-alcoves-pairs}, we obtain a combinatorial proof of \cref{eq:identity-i-1}. 

\subsubsection{Dual graph of the triangulation}

\begin{definition}
    Let $\mathcal{T}$ be a triangulation of a polytope $P$. The \emph{dual graph} of the triangulation $G_\mathcal{T}$ has vertex set equal to the maximal simplices of $\mathcal{T}$ and two such simplices $S_1$ and $S_2$ form an edge, which we denote by $S_1 \sim S_2$, whenever their intersection has \emph{codimension $1$}, that is the dimension of the polytope $S_1\cap S_2$ is $\dim(P)-1$.  
\end{definition}

From \cref{thm:bijection-alcoves-words}, the maximal simplices of the alcoved triangulation of $r\Delta_{1,d}$ are labeled by words in $[r]^d$. The following theorem gives a description of the dual graph of this triangulation in terms of words. 

\begin{definition}\label{def:G_rd}
    For $r,d \geq 1$, let $G_{r,d}$ be the graph on vertex set $[r]^d$ and edges given by
    \begin{enumerate}
        \item $w_1\,w_2\,\ldots\, w_d \; \sim \; (w_d +1)\, w_1\,w_2\,\ldots\, w_{d-1}\;$ whenever $1\leq w_d < r$, and 
        \item $w_1\,\ldots\,w_i\,w_{i+1}\,\ldots\, w_d \; \sim \; w_1\,\ldots\,w_{i+1}\,w_i\,\ldots\, w_d\;$ for any $1\leq i\leq d-1$ such that $w_i\neq w_{i+1}$.
    \end{enumerate}
\end{definition}

\begin{theorem}\label{thm:dual_graph_alcoved_dilated_standard_simplex}
    Let $\mathcal{T}$ be the alcoved triangulation of $r\Delta_{1,d}$. Then $G_\mathcal{T}$ is isomorphic to $G_{r,d}$.
\end{theorem}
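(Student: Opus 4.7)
The plan is to prove this by analyzing, for each alcove $A \in \mathcal{A}(r\Delta_{1,d})$ with sorted decorated matrix $M$ and word $w = \word_1(A)$, which modifications of $M$ produce another alcove sharing $d$ of the $d+1$ vertices (equivalently, a codimension-$1$ face) with $A$. Each such modification corresponds to removing one row $p \in \{1,\ldots,d+1\}$ of $M$ and replacing it so that the resulting matrix still satisfies \cref{lem:properties-marked-matrix-alcoves-dilated-simplex}. The analysis naturally splits into an interior case ($2 \leq p \leq d$) and an extremal case ($p \in \{1, d+1\}$), which I expect to match items (2) and (1) of \cref{def:G_rd} respectively.

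For the interior case, the two marks flanking row $p$ lie in columns $w_{p-1}$ and $w_p$. Using the reconstruction from the proof of \cref{thm:bijection-alcoves-words}, along with the starting values $S_c := 1 + \sum_{c' < c}|\{j : w_j = c'\}|$, one checks that swapping these two marks leaves every $S_c$ unchanged and modifies only the entries in row $p$ at columns $w_{p-1}$ and $w_p$ (by $-1$ and $+1$ respectively). When $w_{p-1} \neq w_p$ this yields a valid distinct alcove, and the new word is the case (2) swap $w_1\cdots w_{p-2}\, w_p\, w_{p-1}\, w_{p+1}\cdots w_d$ with $i = p - 1$. When $w_{p-1} = w_p$, a short argument comparing rows $p-1,\,p,\,p+1$ shows that row $p$ is uniquely determined by its neighbours, so no new alcove exists and the corresponding face lies on $\partial(r\Delta_{1,d})$. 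This accounts for $d-1$ of the $d+1$ facets.

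The main obstacle is the extremal case. For $p = d+1$, the shared face is $\{v_1,\ldots,v_d\}$ and any neighbour $A'$ must have matrix $M'$ whose rows $2,\ldots,d+1$ coincide with $M$'s rows $1,\ldots,d$, with a new row $1$ inserted on top. The "no bottom marks" condition $M'_{d+1,c} = M'_{1,c+1}$ combined with the starting-value calculation above forces the new row $1$ to equal $M$'s row $1$ with its entry at column $w_d + 1$ decreased by $1$; this places the unique new mark between rows $1$ and $2$ at column $w_d + 1$, giving $\word_1(A') = (w_d+1)\, w_1\cdots w_{d-1}$ as in case (1). The construction is valid precisely when $w_d < r$; otherwise column $w_d+1$ does not exist and the face lies on $\partial(r\Delta_{1,d})$. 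The case $p = 1$ is handled symmetrically and produces the inverse rotation $w \mapsto w_2\cdots w_d\,(w_1-1)$, which represents the same edge of $G_{r,d}$ viewed from the opposite endpoint. Combining all cases shows that the facets of every alcove $A$ correspond bijectively to the edges of $G_{r,d}$ incident to $\word_1(A)$ together with the boundary faces of $r\Delta_{1,d}$, establishing the isomorphism $G_\mathcal{T} \cong G_{r,d}$. The hardest step is the bookkeeping in the extremal case: the new top row does not come from a local swap of marks but from a global shift, and the specific "decrement at column $w_d+1$" formula has to be extracted from the starting-value calculation rather than read off directly.
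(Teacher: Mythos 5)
Your proposal is correct and follows essentially the same route as the paper's proof: characterize adjacency of alcoves as sharing $d$ rows of the decorated matrix, then case-split on which row is replaced, with interior rows forcing the adjacent-transposition edges (case 2 of \cref{def:G_rd}) and the first/last row forcing the rotation-with-increment edges (case 1), including the same degenerate conditions ($w_{p-1}=w_p$, respectively $w_d=r$ or $w_1=1$) that make the corresponding facet a boundary facet. The computations you flag as the hard steps (the forced form of the new extremal row via the no-bottom-marks condition, and the uniqueness of the replacement row) are exactly the ones carried out in the paper.
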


\begin{proof}
    We show that the map $\word_1$ is the desired isomorphism from $G_\mathcal{T}$ to $G_{r,d}$. Since it is a bijection between the vertex sets of the graphs, it suffices to check that it preserves edges. We first characterize the edges in the dual graph of the alcoved triangulation. Fix $A_1,A_2\in\mathcal{A}(r\Delta_{1,d})$, and denote the decorated matrix of $\mathcal{I}_{A_i}$ by $M_i$. Note that $A_1$ and $A_2$ form an edge in $G_\mathcal{T}$ if and only if they have $d$ elements in common. In terms of these decorated matrices, $A_1 \sim A_2$ if and only if $M_1$ and $M_2$ have $d$ rows in common. To ease notation, let $w^{(i)} = \word_1(A_i)$ be the words associated to these alcoves, let $R_j$ be the $j$th row of $M_1$ consisting of the elements $R_{j,k}$ for $k=1,2,\ldots,r$. Then, if the mark between $R_j$ and $R_{j+1}$ is in column $m$ $(=w^{(1)}_j)$, the elements of the rows satisfy $R_{j+1,m} = R_{j,m}+1$ and $R_{j+1,\ell} = R_{j,\ell}$ for $\ell\neq m$.
    
    Suppose $A_1 \sim A_2$. Assume that row $i$ of $M_1$ is the row that does not appear in $M_2$. We divide the argument in cases by the value of $i$. 
    
    If $i=1$, we claim that it is not possible to insert a new \emph{different} row in between rows $j$ and $j+1$ of $M_1$ with $j=2,3,\ldots,d$ while maintaining the sorted set conditions. Inserting a new row $R'$ with elements $R'_{s}$ in between $R_j$ and $R_{j+1}$ would imply the following equations in order to be a valid matrix for a sorted set: $$R_{j,\ell} = R'_\ell = R_{j+1,\ell} \;\;\text{ for }\;\; \ell \neq m,\;\; \text{and}$$ $$R_{j,m}+1 = R'_m+1 = R_{j+1,m} \;\;\text{ or }\;\; R_{j,m}+1 = R'_m = R_{j+1,m}.$$ In the first case $R' = R_j$ and in the second $R' = R_{j+1}$. Thus, $M_2$ consists of rows $2,3,\ldots,d+1$ of $M_1$ and a new row appended in the end. 
    In terms of words, this means that $w^{(2)}_t = w^{(1)}_{t+1}$ for $t=1,2,\ldots,d-1$. Note that the mark in row $1$ of $M_1$ is in column $w^{(1)}_1 > 1$. We have the inequality since $w^{(1)}_1 = 1$ would mean that $R_{2,1}=2$ which cannot be the initial value of a sorted set corresponding to an alcove of $r\Delta_{1,d}$ by \cref{lem:properties-marked-matrix-alcoves-dilated-simplex}. Thus, the row $N$ that we may add to get $M_2$ has elements $(N_1,N_2,\ldots,N_r)$ with $N_m = R_{d+1,m}$ for $m\neq w^{(1)}_1-1$ and $N_{w^{(1)}_1-1} = R_{d+1,w^{(1)}_1-1}+1$. That is, $w^{(2)}_d = w^{(1)}_{1}-1$. This is equivalent to condition 1. in \cref{def:G_rd}. The case $i=d+1$ is the counterpart of $i=1$, hence the reasoning is analogous and we obtain condition 1. as it is written in the definition instead. 

    Now suppose $1<i<d+1.$ Let $w^{(1)}_{i-1} = c$ and $w^{(1)}_{i} = c'$. Note that the rows of $M_1$ satisfy
    \begin{alignat*}{3}
        R_{i-1,m} &= R_{i,m} &&= R_{i+1,m} && \quad \text{ for } m\neq c,c' \\
        R_{i-1,c} &= R_{i,c} + 1 &&= R_{i+1,c} + 1 && \\
        R_{i-1,c'} &= R_{i,c'} &&= R_{i+1,c'} + 1 && 
    \end{alignat*}
    Hence rows $R_{i-1}$ and $R_{i+1}$ cannot be adjacent in $M_2$ by the uniqueness of marks in between rows of decorated matrices from \cref{lem:properties-marked-matrix-alcoves-dilated-simplex}. Then $M_2$ consists of rows $R_1,\ldots,R_{i-1},N,R_{i+1},\ldots,R_{d+1}$ where $N$ is the new row added. Such new row satisfies the equations $N_{i-1,m} = N_{i,m} = N_{i+1,m}$ for $m\neq c,c'$ and
    \begin{align*}
        N_{i-1,c} = N_{i,c} + 1 = N_{i+1,c} + 1 &\quad\text{ or }\quad N_{i-1,c} = N_{i,c} = N_{i+1,c} + 1,\\
        N_{i-1,c'} = N_{i,c'} = N_{i+1,c'} + 1 &\quad\text{ or }\quad N_{i-1,c'} = N_{i,c'} + 1 = N_{i+1,c'} + 1 
    \end{align*}
    In order to obtain a valid matrix we need to pick either both conditions on the left tor both conditions on the right; and to avoid the case $N = R_i$, the two conditions on the right are the correct choice. In terms of words we have $w^{(2)}_t = w^{(1)}_t$ for $t=1,\ldots,i-2,i+1,\ldots,d$; $w^{(2)}_{i-1} = w^{(1)}_i$ and $w^{(2)}_i = w^{(1)}_{i-1}.$ This is equivalent to condition 2. in \cref{def:G_rd}. 

    The constructions from before can be done in reverse to see that given two words $w_1,w_2 \in [r]^d$ that form an edge in $G_{r,d}$, the corresponding alcoves $A_1$ and $A_2$ in $\mathcal{A}(r\Delta_{1,r})$ are adjacent in $G_\mathcal{T}.$ 
\end{proof}

\begin{remark}
    The adjacency of alcoves in the standard simplex, and more generally in arbitrary alcoves, was described in \cite[Section 2.5]{lam2007alcoved}. We included the previous proof to make the document self-contained and to verify the connection to the words of the adjacent alcoves. 
\end{remark}

The graph $G_{r,d}$ plays an important role when it comes to dual graphs of alcoved triangulations of polytopes. We will describe one now, and we postpone the second one to the end of the next section. From \cref{thm:alcoves-sorted-sets} it follows that for an alcoved polytope $P$ in $\mathbb{R}^{d+1}$ with $(H,x)$-representation such that it lies in the hyperplane $x_1+\ldots+x_{d+1} = r$, the dual graph of the alcoved triangulation $G_\mathcal{A}$ is a connected subgraph of $G_{r,d}.$ Nevertheless, not every connected subgraph of $G_{r,d}$ can be realized as a dual graph. This can be seen in \cref{fig:alcoved_example} where we can pick four alcoves and such that their union is non-convex and the graph of their adjacency is connected. Hence, the following natural question arises. 

\begin{question}
    What connected subgraphs of $G_{r,d}$ correspond to dual graphs of alcoved triangulations?
\end{question}

We finish this section finding the alcoves in $\mathcal{A}(r\Delta_{1,d})$ whose intersection with the facets of the dilated hypersimplex has codimension $1$. These alcoves are an important in \cref{sec:dual_graph_hypersimplex_general}. We denote that set by $\mathcal{A}^\circ (r\Delta_{1,d})$, and refer to it as the set of \emph{boundary alcoves}. Additionally, we need the following map on words. For $j=1,2,\ldots,d-1$, let $\delta_j$ be the \emph{$j$th duplication map} computed as follows. For a word $v = v_1\,v_2\,\ldots\,v_{d-1}$, suppose $k$ is the index such that $v_k$ is the $j$th letter in the order $1<2<\ldots<d-1$ that is found when reading $v$ from left to right and looping around if needed. Then $\delta_j(v)$ duplicates the letter $v_k$ by adding a copy of it to its right. Note that $\delta_j(v)$ has length $d$. 

\begin{example}
    Let $v = 3\,2\,5\,4\,2\,1\,3$. The third letter that is read in the cyclic order described before is in position $k=5$, so $\delta_3(v) = 3\,2\,5\,4\,2\,\underline{2}\,1\,3$ where the underlined number is the one added by the duplication map.
\end{example}

\begin{prop}\label{prop:boundary_alcoves}
    Let $W^\circ = \{\word_1(A) \,:\, A\in\mathcal{A}^\circ (r\Delta_{1,d})\}$ be the set of words corresponding to the alcoves of $r\Delta_{1,d}$ that intersect the facets of the polytope in codimension $1$. Then $w=w_1\,w_2\,\ldots\,w_d \in W^\circ$ satisfies one of the following conditions:
    \begin{enumerate}
        \item $w_1 = 1$,
        \item $w_{d} = d$, or
        \item $w\in \left\{\delta_j(v) \,:\, v\in [r]^{d-1}\right\}$ for some $j\in\{1,2,\ldots,d-1\}$.
    \end{enumerate}
\end{prop}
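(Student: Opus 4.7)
The plan is to translate the geometric condition of being a boundary alcove into a combinatorial condition on the decorated matrix $\widetilde{M}_{\mathcal{I}_A}$, and then read off the corresponding condition on the word $w = \word_1(A)$. Since $r\Delta_{1,d}$ has exactly $d+1$ facets $F_i = r\Delta_{1,d} \cap \{x_i = 0\}$, and $A$ is a $d$-simplex with $d+1$ vertices, the intersection $A \cap F_i$ has codimension $1$ in $A$ if and only if exactly $d$ of the vertices of $A$ lie on $F_i$. A vertex $\vec{a}_j$ lies on $F_i$ precisely when the value $i$ does not appear in the $j$-th row of $M_{\mathcal{I}_A}$, so $A$ is a boundary alcove if and only if some value $i \in \{1,\dots,d+1\}$ is absent from at least $d$ of the $d+1$ rows of $M_{\mathcal{I}_A}$.

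Next I will pass to the column-major reading of $M_{\mathcal{I}_A}$: concatenating the columns top-to-bottom and left-to-right produces a sequence $s_1, s_2, \dots, s_{(d+1)r}$ that starts at $s_1 = 1$, is weakly increasing, increments by exactly one at each mark, and ends at $s_{(d+1)r} = d+1$ (by the inverse construction in the proof of \cref{thm:bijection-alcoves-words}). Each value in $\{1,\dots,d+1\}$ therefore occupies a nonempty contiguous block of this sequence. Since the $j$-th row of $M_{\mathcal{I}_A}$ consists of those $s_p$ with $p \equiv j \pmod{d+1}$ and consecutive positions have distinct residues modulo $d+1$, a value $i$ is contained in at most one row if and only if its block has length exactly one. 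Writing $n_i := |\{p : s_p = i\}|$, this shows that $A \in \mathcal{A}^\circ(r\Delta_{1,d})$ if and only if $n_i = 1$ for some $i$.

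The last step is to characterize which words give some $n_i = 1$. Along the concatenated sequence the marks appear in lexicographic order first by column and then by row-position; the mark corresponding to $w_a = b$ sits at position $a + (b-1)(d+1)$, and the $k$-th mark in this order effects the transition from value $k$ to value $k+1$. A direct position calculation yields three cases: $n_1 = 1$ iff the lex-first mark is at column $1$ and row-position $1$, i.e.\ $w_1 = 1$; $n_{d+1} = 1$ iff the lex-last mark is at column $r$ and row-position $d$, i.e.\ $w_d = r$; and for $i \in \{2,\dots,d\}$, $n_i = 1$ iff two lex-consecutive marks share a column and differ by $1$ in row-position, equivalently $w_k = w_{k+1}$ for some $k \in \{1,\dots,d-1\}$. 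Finally, I will match this third case with condition (3) by noting that the words with a consecutive repetition are exactly the images of the duplication maps $\delta_j$: each $\delta_j(v)$ contains a duplicated pair $v_k v_k$ by construction, and conversely any $w \in [r]^d$ with $w_k = w_{k+1}$ is recovered as $\delta_j(v)$ by deleting one copy of the repeated letter and letting $j$ be the rank of that letter in the cyclic reading order of $v$. The main obstacle is the careful position bookkeeping in the middle step; once the formula $a + (b-1)(d+1)$ is in hand, the three conditions of the proposition fall out directly.
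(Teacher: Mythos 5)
Your argument is correct and follows essentially the same route as the paper's proof: translate ``$A$ meets the facet $x_i=0$ in codimension one'' into ``the value $i$ occupies exactly one cell of the decorated matrix,'' and then read off what this forces on the marks. Your column-major bookkeeping (the weakly increasing sequence $s_1,\dots,s_{(d+1)r}$ with unit jumps at the marks, the residue-class description of the rows, and the position formula $a+(b-1)(d+1)$) is a more systematic way of organizing what the paper does by inspecting rows, and your two-step handling of the middle facets --- first ``some $n_i=1$ with $2\le i\le d$ iff $w_k=w_{k+1}$ for some $k$,'' then the observation that words with a consecutive repetition are exactly the images of the duplication maps --- is the same content as the paper's third case, which instead deletes the row containing the entry $i$ and reinserts it. One substantive point: your case $n_{d+1}=1$ correctly yields $w_d=r$, whereas condition 2 of the proposition (and the corresponding sentence in the paper's proof) reads $w_d=d$. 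The evidence is on your side: for $d=2$, $r=3$ the alcove with word $2\,3$ has vertices $(2,1,0)$, $(1,2,0)$, $(1,1,1)$, so it meets the facet $x_3=0$ in an edge, yet $2\,3$ satisfies none of the three conditions as literally stated (it is not in the image of any $\delta_j$ and $w_1\neq 1$, $w_2\neq 2$); with $w_d=r$ it is covered. The paper's only worked example has $r=d$, which hides the discrepancy, so the statement should be read with $w_d=r$, and your derivation supplies the correct version.
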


\begin{proof}
    Recall that facets of $r\Delta_{1,d}$ are determined by the hyperplanes $H_j = \{\vec{x}\in\mathbb{R}^{d+1}\,:\,x_j = 0\}$ for $j=1,2,\ldots,d+1.$ We consider the case of $H_1$ first. If $A\in\mathcal{A}(r\Delta_{1,d})$ intersects $H_1$ in codimension $1$, this means that $A$ has $d$ vertices in this hyperplane. Hence, $d$ rows of the decorated matrix associated to $A$ have no ones. The extra row of the matrix has a $1$ since $A$ has dimension $d$ and is contained in the dilated hypersimplex. Thus, this row is the first row, and contains exactly one $1$ in the first column. Since the second column has no ones, the first mark of $\mathcal{I}_A$ is in column $1$. This means that the first letter of $\word_1(A)$ is a ``$1$''. Similarly, the same argument can be modified to show that if $A$ intersects $H_{d+1}$ in codimension $1$, then the last letter of $\word_1(A)$ is a ``$d$''. This shows the first two cases. 

    Now consider an alcove $A$ that intersects $H_j$, for $1<j<d+1$, in codimension $1$. Let $M'$ be the decorated matrix corresponding to the rows of $M_{\mathcal{I}_A}$ that do not contain the entry ``$j$''. As before, the extra row has an entry ``$j$'', and then it has to be placed after the $(j-1)$th mark of $M'$. In terms of $\word_1(A)$, this means that the $(j-1)$th letter in the reading order determined by the top-to-bottom and left-to-right order in the matrix is doubled matching the action of $\delta_{j-1}$ on the word of $M'$. Since $M'$ corresponds to an alcove of a facet of $r\Delta_{1,d}$, its reading word is an element of $[r]^{d-1}$, which shows the third case. 
\end{proof}

\begin{example}
    Let $r=3$ and consider the $r$th dilation of the standard simplex of dimension $d=3$. \cref{fig:dual-graph-standard-3-3} shows the graph $G_{3,3}$. The words in different color are the words $w\in[3]^3$ that satisfy $w_1 = 1$, while the words inside of boxes correspond to the image of the duplication map $\delta_1$. Note that the first set corresponds to alcoves intersecting the hyperplane $H_1$ and the second one intersecting $H_2$ (despite the fact that $j=1$ in the duplication map).
    \begin{figure}[t]
        \centering
        \includegraphics[width=0.6\linewidth]{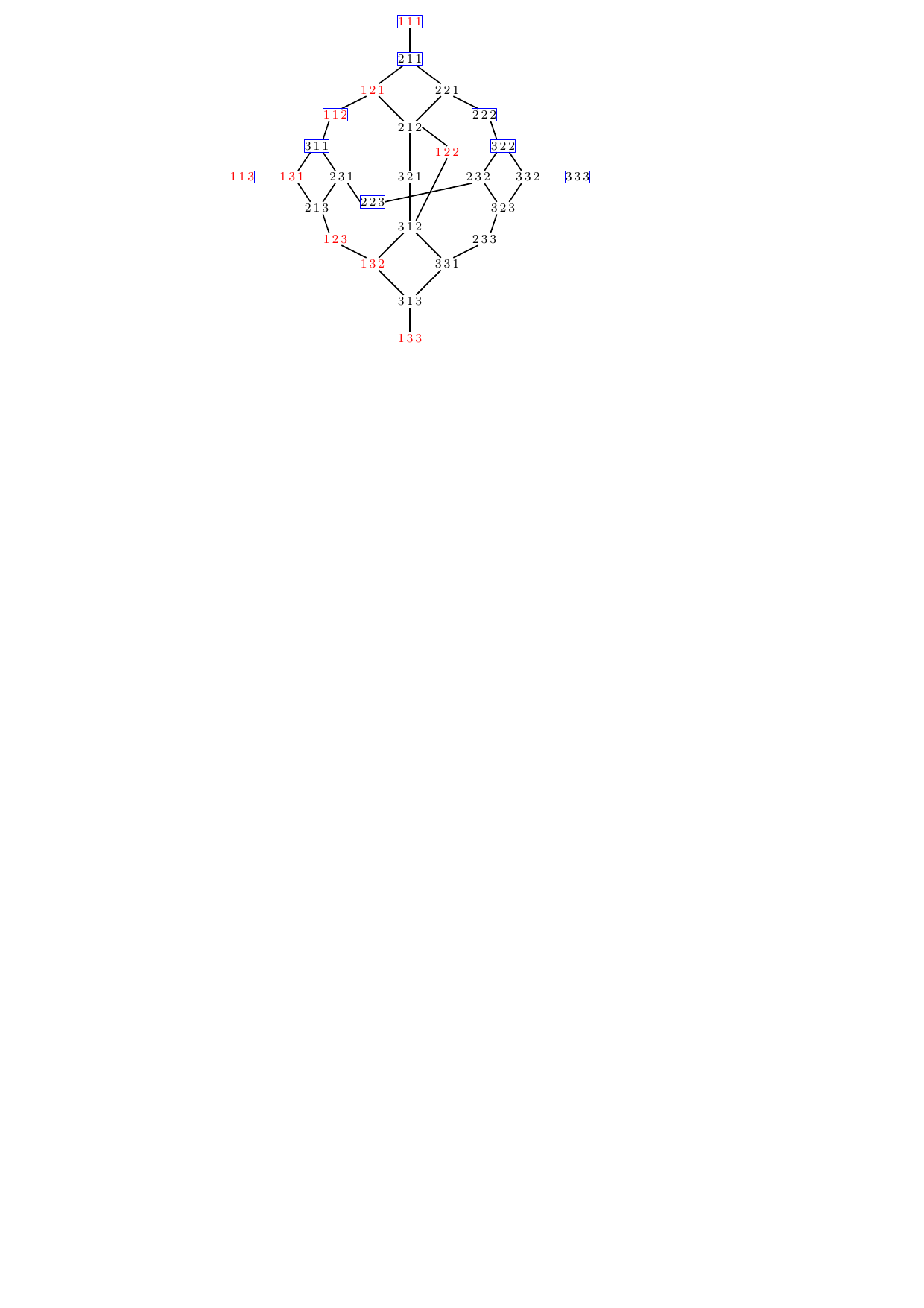}
        \caption{Illustration of $G_{3,3}$, which is isomorphic to the dual graph of the alcoved triangulation of $3\Delta_{1,3}$. The words in different color correspond to alcoves that intersect the hyperplane $x_1 = 0$ in codimension $1$. Similarly, the words in boxes correspond to alcoves intersecting $x_2=0$ in codimension $1$.}
        \label{fig:dual-graph-standard-3-3}
    \end{figure}
\end{example}

\begin{definition}\label{def:w_circ_j}
    To make the notation more explicit, for $j=1,2,\ldots,d+1$, denote by $W^\circ_j$ the set of words in $[r]^d$ that correspond to boundary alcoves of $\mathcal{A}(r\Delta_{1,d})$ with respect to the hyperplane $H_j$. 
\end{definition}

\begin{remark}
    The key idea for the proof of the previous proposition is to consider the intersection of the alcove and the hyperplane that defines a facet as an alcove of a lower dimensional dilated simplex and then adding the correct new row that turns it into a valid alcove for the full dimensional simplex.
\end{remark}

\begin{example}
    Fix $j=3$ and take $w = 2\,4\,5\,6\,1 \in [6]^5$ and suppose $A\in\mathcal{A}(6\Delta_{1,6})$ is such that the reading word of $A\cap H_3$ is $w$. Then the process of the third case of the proof of \cref{prop:boundary_alcoves} can be represented diagrammatically by considering the decorated matrix with word $w$ as being filled skipping $j=3$ and then splitting the $j$th mark to insert the new row. That is,  
    
    \begin{center}
        \begin{tikzpicture}[scale = 0.7]
            \def \w{1};
            \def \h{1};
            \def \r{0.3};
            \def \rows{6};
            \def \cols{6};
            
            \begin{scope}[xshift=0cm,yshift = 0.5cm]
            \foreach \i in {0,...,\rows}
            {
            \draw[gray!50] (0,\i*\h)--(\w*\cols,\i*\h);
            }
            \foreach \i in {0,...,\cols}
            {
            \draw[gray!50] (\w*\i,0)--(\w*\i,\rows*\h);
            }
            \foreach \xx\yy\c in {0/5/1,0/4/1,0/3/1,0/2/1,0/1/1,0/0/2,
                                  1/5/2,1/4/4,1/3/4,1/2/4,1/1/4,1/0/4,
                                  2/5/4,2/4/4,2/3/4,2/2/4,2/1/4,2/0/4,
                                  3/5/4,3/4/4,3/3/5,3/2/5,3/1/5,3/0/5,
                                  4/5/5,4/4/5,4/3/5,4/2/6,4/1/6,4/0/6,
                                  5/5/6,5/4/6,5/3/6,5/2/6,5/1/7,5/0/7
                                  }
                {
                \node at (\w*.5+\w*\xx,\h*.5+\h*\yy) {\c};
                }
                \foreach \xx\yy in {1/5,3/4,4/3,0/1,5/2}
                {
                \draw[red,ultra thick] (\xx,\yy) --  (\xx+1,\yy);
                }
            \end{scope}

            \def \rows{7};
            \def \cols{6};

            \begin{scope}[xshift=12cm]
            \foreach \i in {0,...,\rows}
            {
            \draw[gray!50] (0,\i*\h)--(\w*\cols,\i*\h);
            }
            \foreach \i in {0,...,\cols}
            {
            \draw[gray!50] (\w*\i,0)--(\w*\i,\rows*\h);
            }
            \foreach \xx\yy\c in {0/6/1,0/4/1,0/3/1,0/2/1,0/1/1,0/0/2,
                                  1/6/2,1/4/4,1/3/4,1/2/4,1/1/4,1/0/4,
                                  2/6/4,2/4/4,2/3/4,2/2/4,2/1/4,2/0/4,
                                  3/6/4,3/4/4,3/3/5,3/2/5,3/1/5,3/0/5,
                                  4/6/5,4/4/5,4/3/5,4/2/6,4/1/6,4/0/6,
                                  5/6/6,5/4/6,5/3/6,5/2/6,5/1/7,5/0/7
                                  }
                {
                \node at (\w*.5+\w*\xx,\h*.5+\h*\yy) {\c};
                }

            \foreach \xx\yy\c in {5/5/6,4/5/5,3/5/4,2/5/4,1/5/3,0/5/1}
                {
                \node at (\w*.5+\w*\xx,\h*.5+\h*\yy) {\tcb{\c}};
                }

                \foreach \xx\yy in {3/4,4/3,5/2,0/1}
                {
                \draw[red,ultra thick] (\xx,\yy) --  (\xx+1,\yy);
                }

                \foreach \xx\yy in {1/5,1/6}
                {
                \draw[red,ultra thick,dashed] (\xx,\yy) --  (\xx+1,\yy);
                }
            \end{scope}

        \node at (3,-1.0) {$w = 2\,4\,5\,6\,1$};
        \node at (15,-1.0) {$\delta_2(w) = 2\,\underline{2}\,4\,5\,6\,1$};
        \draw[->] (7,3.5)--(11,3.5);
        \draw[->] (7,-1.0)--(11,-1.0);
        \end{tikzpicture}
    \end{center}
\end{example}

\subsection{The dilated hypersimplex}\label{sec:delta-i-d-simplices-labels}

We start by giving a proof of the fact that Eulerian numbers coincide with the normalized volume of hypersimplices by giving a explicit bijection from the set of alcoves to the permutations with fixed number of descents. In the following theorem, the map $\sigma^{(i)}_\bullet$ is defined for alcoves of $\Delta_{i,d}$ in an analogous way as in \cref{def:permutation-alcove-dilated-simplex}.

\begin{theorem}\label{thm:alcoves-hypersimplex-eulerian}
    For $1\leq i \leq d$, the map $$\sigma^{(i)}_{\bullet}\;:\; \mathcal{A}(\Delta_{i,d}) \to \mathfrak{A}(d,i)$$ is a bijection.
\end{theorem}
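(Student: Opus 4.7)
The plan is to mimic the proof strategy of \cref{sec:delta-1-d-simplices-labels}, adapting the decorated-matrix toolkit to alcoves of $\Delta_{i,d}$. For any $A\in\mathcal{A}(\Delta_{i,d})$ the matrix $M_{\mathcal{I}_A}$ now has $d+1$ rows and $i$ columns with entries in $[d+1]$, and my first step is to establish the analog of \cref{lem:properties-marked-matrix-alcoves-dilated-simplex}: the decorated matrix $\widetilde{M}_{\mathcal{I}_A}$ has exactly one mark in each of the $d$ row-gaps, no bottom marks, and every one of its $i$ columns carries at least one mark. The first two claims follow from the identities $I_{1,1}=1$ and $I_{d+1,i}=d+1$ (forced for full-dimensional alcoves by facet-avoidance), which pin the total strict increase of the linear reading at exactly $d$ and thus match the lower bound coming from the $d$ distinct row-gaps. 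The ``no constant column'' claim is then handled case by case: for $b\geq 2$, the no-bottom-marks condition and constancy of column $b$ give $I_{d+1,b-1}=I_{d+1,b}$, placing equal entries in positions $b-1$ and $b$ of row $d+1$ and contradicting that the row is an $i$-subset of $[d+1]$; for $b=1$, constancy forces $I_{d+1,1}=1<I_{1,2}$, which violates the no-bottom-marks condition at column~$1$.

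Once the matrix structure is understood, I define $\sigma^{(i)}_A$ exactly as in \cref{def:permutation-alcove-dilated-simplex}, producing a permutation of $[d]$. Within each column the row indices are read top to bottom, hence in increasing order, so descents can appear only at column transitions. At each transition I claim the last marked row-gap $j_b$ of column $b$ strictly exceeds the first marked row-gap $j'_{b+1}$ of column $b+1$. Arguing by contradiction, if $j_b\leq j'_{b+1}$, then row $j_b+1$ has column $b$ entry equal to $I_{1,b+1}$ (no further marks in column $b$ increase it past the last) and column $b+1$ entry also equal to $I_{1,b+1}$ (since $j_b+1\leq j'_{b+1}$ places it above the first mark of column $b+1$), violating the strict increase of entries inside the $i$-subset $I_{j_b+1}$. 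Hence each of the $i-1$ column boundaries contributes exactly one descent, so $\sigma^{(i)}_A\in\mathfrak{A}(d,i)$.

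The inverse is built in the spirit of the proof of \cref{thm:bijection-alcoves-words}. Given $\sigma\in\mathfrak{A}(d,i)$, decompose $\sigma$ into its $i$ maximal ascending runs $R_1,\ldots,R_i$ determined by the $i-1$ descents. For each $r\in R_b$ place a mark at row-gap $r$ of column $b$ in a $(d+1)\times i$ grid, then fill the grid by starting at $1$ in the top-left cell, incrementing by $1$ across every mark, and enforcing equality across every column boundary. By construction the rows of the resulting matrix are sorted and no bottom marks appear. A short argument shows that the $d+1$ rows are pairwise distinct $i$-subsets of $[d+1]$: if two rows coincided, no row-gap strictly between them could be marked, but $R_1\cup\cdots\cup R_i=[d]$, so every row-gap is. The output is therefore the vertex set of a valid alcove of $\Delta_{i,d}$, and the two composites with $\sigma^{(i)}_\bullet$ are identity maps directly from the constructions. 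I expect the main obstacle to be the descent argument at column boundaries: in contrast to the dilated standard simplex case, where the mark count from \cref{lem:properties-marked-matrix-alcoves-dilated-simplex} essentially suffices, here one must invoke the strict increase of entries within an $i$-subset to rule out the configuration $j_b\leq j'_{b+1}$.
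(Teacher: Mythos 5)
Your proposal is correct and follows essentially the same route as the paper: analyze the decorated matrix of a sorted collection, show there is exactly one mark per row-gap, no bottom marks, and at least one mark per column, deduce that descents of $\sigma^{(i)}_A$ occur precisely at the $i-1$ column transitions, and invert by placing marks according to the maximal ascending runs of a permutation in $\mathfrak{A}(d,i)$. You actually spell out several steps (the no-constant-column argument and the contradiction ruling out $j_b\leq j'_{b+1}$) that the paper only asserts, so no changes are needed.
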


\begin{proof}
    Let $A\in\mathcal{A}(\Delta_{i,d})$ and $\mathcal{I} = \mathcal{I}_A$ its associated sorted set. We describe the structure of $\widetilde M_{\mathcal{I}}$. First, the elements of $\mathcal{I}$ are $i$-subsets of $\{1,2,\ldots,d+1\}$. This can be see from the $(H,x)$-representation of $\Delta_{i,d}$ in \cref{eq:hypersimplex}. Hence, each row of $\widetilde M_{\mathcal{I}}$ has no repeated entries. Now, following the same arguments for \cref{lem:properties-marked-matrix-alcoves-dilated-simplex} we have that $\widetilde M_{\mathcal{I}}$ has exactly $d$ marks, each in between a pair of adjacent rows. Note that if the $i$-th mark is higher than the $(i+1)$-th mark and in a different column, there are rows with repeated values. This is also the case if there is a column without a mark. Therefore, given a mark in $\widetilde M_{\mathcal{I}}$, the next mark is either lower in the same column or higher in a different column and every column of the matrix has at least one mark. This implies that the permutation $\sigma^{(i)}_A$, obtained by reading the row labels of the marks from top to bottom and left to right, is a permutation of $\{1,2\ldots,d\}$ with exactly $i-1$ descents, i.e. $\sigma^{(i)}_A\in\mathfrak{A}(d,i).$ Moreover, from a permutation $\tau\in\mathfrak{A}(d,i)$, the decorated matrix constructed by placing the marks in the rows given by the one-line notation of $\tau$ and changing columns every times we reach a descent corresponds, after filling it as in the proof of \cref{thm:bijection-alcoves-words}, to an alcove of $\Delta_{i,d}$.
\end{proof}

\begin{example}\label{ex:alcove_hypersimplex}
    Let $i=3$ and $d=6$. The set of points 
    \begin{multline*}
        A = \{(1,0,1,0,1,0,0),(0,1,1,0,1,0,0),(0,1,0,1,1,0,0),(0,1,0,1,0,1,0),\\(0,0,1,1,0,1,0),(0,0,1,1,0,0,1),(0,0,1,0,1,0,1)\}\subseteq \mathbb{R}^7
    \end{multline*}
    defines an alcove of $\Delta_{3,6}\subseteq \mathbb{R}^7_x$. The decorated matrix in this case is 
    \begin{center}
        \begin{tikzpicture}[scale = 0.7]
            \def \w{1};
            \def \h{1};
            \def \r{0.3};
            \def \rows{7};
            \def \cols{3};
            \foreach \i in {0,...,\rows}
            {
            \draw[gray!50] (0,\i*\h)--(\w*\cols,\i*\h);
            }
            \foreach \i in {0,...,\cols}
            {
            \draw[gray!50] (\w*\i,0)--(\w*\i,\rows*\h);
            }
            \foreach \xx\yy\c in {0/6/1,0/5/2,0/4/2,0/3/2,0/2/3,0/1/3,0/0/3,
                                  1/6/3,1/5/3,1/4/4,1/3/4,1/2/4,1/1/4,1/0/5,
                                  2/6/5,2/5/5,2/4/5,2/3/6,2/2/6,2/1/7,2/0/7
                                  }
            {
            \node at (\w*.5+\w*\xx,\h*.5+\h*\yy) {\c};
            }
            \foreach \xx\yy in {0/6,0/3,1/5,1/1,2/4,2,2}
            {
            \draw[red,ultra thick] (\xx,\yy) --  (\xx+1,\yy);
            }
        \end{tikzpicture}
    \end{center}
    and the permutation is $\sigma^{(3)}_A = 1\,4\,2\,6\,3\,5\,\in\mathfrak{A}(6,3).$
\end{example}

With this interpretation of the alcoves of $\Delta_{i,d}$ we can prove that the map $\pair_1$ from the previous section is a bijection. 

\begin{proof}[Proof of \cref{thm:bijection-alcoves-pairs}]

Let $A^{(j)}_\bullet\,:\,\mathfrak{A}(d,j) \to \mathcal{A}(\Delta_{j,d}) $ be the inverse map of $\sigma^{(j)}_\bullet.$ Define the map $$\alc_1\;:\; \bigcup_{j=1}^d \mathfrak{C}(r-1,d+1,r-j)\times \mathfrak{A}(d,j) \to \mathcal{A}(r\Delta_{1,d})$$ by $\alc_1(\vec{c},\sigma) = \vec{c} + A_\sigma^{(j)}$ for $(\vec{c},\sigma) \in \mathfrak{C}(r-1,d+1,r-j)\times \mathfrak{A}(d,j)$. Here the $+$ sign denotes the translation of the set in the direction given by the vector. First, we check that $A':=\alc_1(\vec{c},\sigma) \in \mathcal{A}(r\Delta_{1,d})$. Note that adding $\vec{c}$ to each of the vertices of $A_\sigma^{(j)}$ is reflected in the decorated matrix as adding $(d+1)c_j$ entries ``$j$''  after the $(j-1)$th mark (here we assume the $0$th mark is in the top-left corner of the matrix). Thus, $A'$ is an alcove since $\mathcal{I}_{A'}$ is sorted. Moreover, in the decorated matrix of ${\mathcal{I}_{A'}}$ all entries are at most $d+1$, there are $d+1$ rows and $j+\sum_k \vec{c}_k = r$ columns. Hence, $A'$ is an alcove of $r\Delta_{1,d}$ as desired. Finally, by \cref{lem:comp-A-from-decorated-matrix}, \cref{def:permutation-alcove-dilated-simplex} and the previous description of the map, it follows that $\pair_1$ and $\alc_1$ are inverses. 
\end{proof}

\begin{example}
    Let $\sigma = 1\,4\,2\,6\,3\,5\,\in\mathfrak{A}(6,3)$ as in \cref{ex:alcove_hypersimplex} and $\vec{c} = (1,0,2,0,0,1,0) \in \mathfrak{C}(6,7,4)$ (that is, $r=7$, $d=6$ and $j = 3$ in \cref{thm:bijection-alcoves-pairs}.) The decorated matrix associated to $A' = \alc_1(\vec{c},\sigma)$ is 
    \begin{center}
        \begin{tikzpicture}[scale = 0.7]
            \def \w{1};
            \def \h{1};
            \def \r{0.3};
            \def \rows{7};
            \def \cols{7};
            \foreach \i in {0,...,\rows}
            {
            \draw[gray!50] (0,\i*\h)--(\w*\cols,\i*\h);
            }
            \foreach \i in {0,...,\cols}
            {
            \draw[gray!50] (\w*\i,0)--(\w*\i,\rows*\h);
            }
            \foreach \xx\yy\c in {1/6/1,1/5/2,1/4/2,1/3/2,3/2/3,3/1/3,3/0/3,
                                  4/6/3,4/5/3,4/4/4,4/3/4,4/2/4,4/1/4,4/0/5,
                                  5/6/5,5/5/5,5/4/5,6/3/6,6/2/6,6/1/7,6/0/7
                                  }
            {
            \node at (\w*.5+\w*\xx,\h*.5+\h*\yy) {\c};
            }

            \foreach \xx\yy\c in {0/6/1,0/5/1,0/4/1,0/3/1,0/2/1,0/1/1,0/0/1,
                                  2/6/3,2/5/3,2/4/3,2/3/3,2/2/3,2/1/3,2/0/3,
                                  3/6/3,3/5/3,3/4/3,3/3/3,1/2/3,1/1/3,1/0/3,
                                  6/6/6,6/5/6,6/4/6,5/3/6,5/2/6,5/1/6,5/0/6
                                  }
            {
            \node at (\w*.5+\w*\xx,\h*.5+\h*\yy) {\tcb{\c}};
            }
            \foreach \xx\yy in {1/6,1/3,4/5,4/1,5/4,6/2}
            {
            \draw[red,ultra thick] (\xx,\yy) --  (\xx+1,\yy);
            }
            \foreach \xx\yy in {0/7}
            {
            \draw[red,ultra thick,dashed] (\xx,\yy) --  (\xx+1,\yy);
            }
        \end{tikzpicture}
    \end{center}
    The dashed mark is the $0$th mark mentioned in the theorem, and the entries in a different color are the entries added to the decorated matrix $A^{(3)}_\sigma$ to obtain $A'.$
\end{example}

We now turn our attention to dilated hypersimplices. The $(H,x)$-representation of these polytopes is given by $$r\Delta_{i,d} = \left\{ \vec{x}\in\mathbb{R}_x^{d+1} \;:\; 0\leq x_1\,,\,x_2\,,\,\ldots\,,\,x_{d+1} \leq r \quad \text{and} \quad x_1+x_2+\cdots+x_{d+1} = ir \right\}.$$ The proof of \cref{eq:identity-general-i} follows from the description of a suitable pair of labelings of $\mathcal{A}(r\Delta_{i,d}).$

\subsubsection{Labeling of the alcoves with pairs of words and permutations}

We point out how \cref{thm:bijection-alcoves-words} can be used to understand triangulations of dilated polytopes.

\begin{observation}\label{obs:main_idea}
    Let $P$ be a polytope of dimension $d$ that has a unimodular triangulation $\mathcal{T} = \{S_i \;|\; i\in I\}$. Consider the (non-unimodular) triangulation $r\mathcal{T} = \{rS \,|\, S\in\mathcal{T}\}$ of $rP$. Then we can use the map $\word_1$ from \cref{sec:delta-1-d-simplices-labels} to construct a labeling $f\,:\, \Delta \to I\times [r]^d$ of the unimodular triangulation $\Delta$ that arises from alcove-triangulating each simplex $rS \simeq r\Delta_{1,d}$.
\end{observation}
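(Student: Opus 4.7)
The plan is to verify that the construction implicit in the observation is well-defined and yields a bijection, proceeding in three steps.

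First, for each unimodular simplex $S_i \in \mathcal{T}$, I would fix an affine lattice isomorphism $\phi_i \colon S_i \to \Delta_{1,d}$ guaranteed by unimodularity; scaling by $r$ extends this to a lattice isomorphism $\phi_i \colon rS_i \to r\Delta_{1,d}$. Pulling back the alcoved triangulation of $r\Delta_{1,d}$ along $\phi_i$ then yields a unimodular triangulation $\mathcal{A}_i$ of $rS_i$, whose maximal simplices are labeled by $[r]^d$ via $\word_1 \circ \phi_i$.

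Second, I would check that the collection $\{\mathcal{A}_i\}_{i \in I}$ glues to a global unimodular triangulation $\Delta$ of $rP$. The only nontrivial point is that whenever two simplices $S_i, S_j \in \mathcal{T}$ meet in a common face $F$, the triangulations $\mathcal{A}_i$ and $\mathcal{A}_j$ must induce the same triangulation on the dilated face $rF$. This follows because the alcoved triangulation of a dilated unimodular simplex is canonical: the sorted-multiset criterion of \cref{thm:alcoves-sorted-sets} depends only on the ambient lattice, and it restricts cleanly to any facet (which is itself a dilated standard simplex of dimension $d-1$) by fixing the corresponding coordinate to its extremal value. Hence both pullbacks produce the canonical alcoved triangulation of $rF$, and the local triangulations are compatible.

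Third, with $\Delta$ defined, I would set $f \colon \Delta \to I \times [r]^d$ by $f(A) = (i, \word_1(\phi_i(A)))$ whenever $A \in \mathcal{A}_i$. Well-definedness holds because each maximal alcove of $\Delta$ lies in the relative interior of a unique $rS_i$, and the map is a bijection because $\word_1$ is a bijection by \cref{thm:bijection-alcoves-words} applied inside each $rS_i$, while $\Delta$ decomposes as the disjoint union (on maximal simplices) of the $\mathcal{A}_i$ over $i \in I$.

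The main obstacle is the compatibility check in step two. I expect this to reduce to the assertion that the alcoved triangulation of $r\Delta_{1,d}$, when restricted to a coordinate hyperplane $\{x_k = 0\}$, recovers the alcoved triangulation of that facet viewed as a copy of $r\Delta_{1,d-1}$. This is essentially built into the sorted-multiset characterization, since restricting to the facet just deletes the letter $k$ from every row of the associated decorated matrices; the remaining matrices are precisely those indexing alcoves of the lower-dimensional dilated simplex.
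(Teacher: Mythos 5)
The paper states this as an unproved observation, and your argument supplies exactly the intended justification: identify each dilated cell $rS_i$ with $r\Delta_{1,d}$, pull back the alcoved triangulation, check compatibility along shared faces, and label the pieces via $\word_1$. The one point in your step two that deserves to be made explicit is that the triangulation of $rS_i$ so obtained is independent of the choice of identification $\phi_i$ (the sorted-multiset criterion as literally stated does depend on the ordering of the coordinates); this holds because, restricted to the hyperplane $x_1+\cdots+x_{d+1}=r$, every subset-sum hyperplane $\sum_{k\in S}x_k=c$ agrees with a consecutive-sum hyperplane, so the alcove arrangement --- and hence the alcoved triangulation of $r\Delta_{1,d}$ --- is invariant under all coordinate permutations, i.e.\ under the full group of lattice-preserving affine automorphisms of the simplex.
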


\begin{remark}
    These triangulations were considered in \cite[Section 4]{Haase_2021} where the authors study the properties of the resulting triangulations and related questions with unimodular triangulations of dilated polytopes. 
\end{remark}

Using this idea we can give a labeling of the alcoves of a dilated hypersimplex as follows. Consider $A\in\mathcal{A}(r\Delta_{i,d})$. It satisfies $A \subseteq rB$ where $B\in\mathcal{A}(\Delta_{i,d})$. Define the \emph{permutation associated to $A$} to be $\tau_A = \sigma_B^{(i)}$. Moreover, there is an affine equivalence $\varphi_B:B\to\Delta_{1,d}$. Through this map, $\varphi_B(A)$ is an alcove of $r\Delta_{1,d}$, and we can compute its $\word_1$. Define the \emph{word of $A$} to be $\word_i'(A) = \word_1(\varphi_B(A))$. Using these objects together with \cref{thm:alcoves-hypersimplex-eulerian}, we obtain the first labeling of $\mathcal{A}(r\Delta_{i,d})$. 

\begin{theorem}\label{thm:bijection-alcoves-words-general-case}
    The map $$\words_i \; : \; \mathcal{A}(r\Delta_{i,d}) \to [r]^d\times \mathfrak{A}(d,i)$$ defined by $\words_i(A) = (\word_i'(A),\tau_A)$ is a bijection.  
\end{theorem}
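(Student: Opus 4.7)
The plan is to combine \cref{obs:main_idea} with the two earlier bijections \cref{thm:alcoves-hypersimplex-eulerian,thm:bijection-alcoves-words}. The observation yields the key geometric fact: the alcoved triangulation of $r\Delta_{i,d}$ is obtained by first taking the (non-unimodular) subdivision $\{rB : B\in\mathcal{A}(\Delta_{i,d})\}$ and then alcove-triangulating each block $rB\simeq r\Delta_{1,d}$. In particular, every $A\in\mathcal{A}(r\Delta_{i,d})$ is contained in a unique block $rB$, and within that block the alcoves are in bijection with the alcoves of $r\Delta_{1,d}$ via $\varphi_B$. This gives a set-theoretic identification
\[
\mathcal{A}(r\Delta_{i,d}) \;\longleftrightarrow\; \bigsqcup_{B\in\mathcal{A}(\Delta_{i,d})} \mathcal{A}(rB).
\]

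With this partition in hand, the two coordinates of $\words_i$ do exactly what is needed. The component $\tau_A = \sigma^{(i)}_B$ records which block contains $A$, and \cref{thm:alcoves-hypersimplex-eulerian} says that $B\mapsto \sigma^{(i)}_B$ is a bijection $\mathcal{A}(\Delta_{i,d})\to\mathfrak{A}(d,i)$. The component $\word_i'(A)=\word_1(\varphi_B(A))$ records the position of $A$ inside its block: since $\varphi_B$ is an affine equivalence sending alcoves of $rB$ to alcoves of $r\Delta_{1,d}$, \cref{thm:bijection-alcoves-words} shows that, for each fixed $B$, the map $A\mapsto\word_1(\varphi_B(A))$ is a bijection from the alcoves of $rB$ to $[r]^d$. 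Putting the two factors together gives $\words_i$.

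The inverse is then explicit: given $(w,\tau)\in[r]^d\times\mathfrak{A}(d,i)$, take $B=A^{(i)}_\tau\in\mathcal{A}(\Delta_{i,d})$ (the preimage of $\tau$ under $\sigma^{(i)}_\bullet$), let $A'\in\mathcal{A}(r\Delta_{1,d})$ be the preimage of $w$ under $\word_1$, and return $\varphi_B^{-1}(A')\subseteq rB$. Verifying that the two compositions are the identity reduces to the facts already established for $\sigma^{(i)}_\bullet$, $\word_1$, and $\varphi_B$.

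The main obstacle is justifying that $\varphi_B$ really does transport the alcove structure, that is, that the unimodular triangulation induced inside $rB$ by the ambient affine Coxeter arrangement matches, under $\varphi_B$, the alcoved triangulation of $r\Delta_{1,d}$. The cleanest way is to observe that $\varphi_B$ can be chosen to be a lattice-preserving affine isomorphism between two top-dimensional alcoves of the type $A_d$ affine Coxeter arrangement, hence an element of the affine Weyl group; such maps preserve the entire arrangement and therefore every alcoved subdivision. Alternatively, one may verify the correspondence block by block using the sorted-multiset description of \cref{thm:alcoves-sorted-sets} applied inside the affine span of $B$.
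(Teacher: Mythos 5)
Your proposal is correct and follows essentially the same route as the paper, which states the theorem as an immediate consequence of \cref{obs:main_idea} together with \cref{thm:alcoves-hypersimplex-eulerian,thm:bijection-alcoves-words}: the block $rB$ containing $A$ is recorded by $\tau_A$ and the position within the block by $\word_1(\varphi_B(A))$. Your extra care in justifying that $\varphi_B$ transports the alcove structure (via a lattice-preserving affine map compatible with the type $A$ arrangement) fills in a detail the paper leaves implicit, but it is the same argument.
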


\subsubsection{Labeling of the alcoves with pairs of compositions and permutations}

Now we describe a more direct labeling of $\mathcal{A}(r\Delta_{i,d})$ by extending the maps from \cref{sec:labeling_pair_case_1}. For alcoves $A\in\mathcal{A}(r\Delta_{i,d})$, although the structure of the decorated matrix $\widetilde M_{\mathcal{I}_A}$ is different from the $i=1$ case, we can still construct a composition and a permutation in a similar way as we did for $r\Delta_{1,d}$. In fact, \cref{lem:properties-marked-matrix-alcoves-dilated-simplex} is also valid for alcoves of $\mathcal{A}(r\Delta_{i,d})$, so \cref{lem:comp-A-from-decorated-matrix}, which now yields a composition of $ir-j$ for some $0\leq j\leq d$, is still valid and \cref{def:permutation-alcove-dilated-simplex} can be mimicked in to define a permutation of the alcoves; moreover, the proof of \cref{prop:properties-comp-A-and-permutation-A} is still valid. Hence, we can use these to obtain the desired labeling. The proof that this is indeed a labeling is analogous to the proof of \cref{thm:bijection-alcoves-pairs}. For $A \in \mathcal{A}(r\Delta_{i,d})$ denote by $\comp'(A)$ and $\sigma_A'$ the composition and permutation (respectively) obtained from the decorated matrix $\widetilde M_{\mathcal{I}_A}$.

\begin{theorem}\label{thm:bijection-alcoves-pairs-general-case}
    The map $$\pair_i\,:\,\mathcal{A}(r\Delta_{i,d})\to \bigcup_{j=1}^d \mathfrak{C}(r-1,d+1,ir-j)\times \mathfrak{A}(d,j)$$ given by $\pair_i(A) = (\comp'(A),\sigma'_A)$ is a bijection.
\end{theorem}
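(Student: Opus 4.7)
The plan is to follow the template of the proof of \cref{thm:bijection-alcoves-pairs} verbatim, with the decorated-matrix picture adapted to the dilated hypersimplex $r\Delta_{i,d}$. Concretely, I will construct an explicit inverse
\begin{equation*}
    \alc_i\,:\,\bigcup_{j=1}^d \mathfrak{C}(r-1,d+1,ir-j)\times \mathfrak{A}(d,j) \longrightarrow \mathcal{A}(r\Delta_{i,d})
\end{equation*}
to $\pair_i$. The three preparatory facts I need are already asserted in the preceding paragraph of the paper: (i) the analogue of \cref{lem:properties-marked-matrix-alcoves-dilated-simplex} (exactly one mark between each pair of adjacent rows of $\widetilde M_{\mathcal{I}_A}$, none on the bottom), which goes through verbatim because alcove vertices remain distinct and matrix entries are still bounded by $d+1$; (ii) the analogue of \cref{lem:comp-A-from-decorated-matrix}, recovering the $k$-th part of $\comp'(A)$ as $\lfloor N_k/(d+1)\rfloor$, where $N_k$ is the number of $k$-entries in the matrix; and (iii) the analogue of \cref{prop:properties-comp-A-and-permutation-A}, which identifies the index $j$ with $1+\des(\sigma'_A)$ via the same count of marks that are lower than the next. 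The bound $c_k\leq r-1$ follows because no coordinate of a full-dimensional alcove in $r\Delta_{i,d}$ can be constantly equal to $r$.

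Given $(\vec{c},\sigma)\in\mathfrak{C}(r-1,d+1,ir-j)\times \mathfrak{A}(d,j)$, let $A^{(j)}_\sigma \in \mathcal{A}(\Delta_{j,d})$ be the alcove produced by inverting the bijection of \cref{thm:alcoves-hypersimplex-eulerian}, and define
\begin{equation*}
    \alc_i(\vec{c},\sigma) = \vec{c} + A^{(j)}_\sigma,
\end{equation*}
the coordinate-wise translate of the vertex set. To show that the image is an alcove of $r\Delta_{i,d}$, I pass to decorated matrices: the matrix associated to $\vec{c}+A^{(j)}_\sigma$ is obtained from $\widetilde M_{\mathcal{I}_{A^{(j)}_\sigma}}$ by inserting, for each $k$, a block of $c_k$ constant-$k$ columns right after the $(k-1)$-th mark (with the $0$-th mark placed at the top-left corner, as in the proof of \cref{thm:bijection-alcoves-pairs}). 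These insertions preserve sortedness, keep the existing mark positions intact, and introduce no new marks. The resulting matrix has $d+1$ rows, $j+\sum_k c_k=ir$ columns, each value $k$ appearing at most $c_k+1\leq r$ times per row, and row sums equal to $ir$, so $\alc_i(\vec{c},\sigma)\in\mathcal{A}(r\Delta_{i,d})$.

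Mutual invertibility is then a matter of reading off the decorated matrices. Applying the generalized \cref{lem:comp-A-from-decorated-matrix} to $\widetilde M_{\mathcal{I}_{\alc_i(\vec{c},\sigma)}}$ returns $\vec{c}$, while the row positions of the marks (unchanged by the column insertions) return $\sigma$; conversely, for any $A\in\mathcal{A}(r\Delta_{i,d})$ with $\pair_i(A)=(\vec{c},\sigma)$, stripping $c_k$ constant-$k$ columns from $\widetilde M_{\mathcal{I}_A}$ yields the decorated matrix of an alcove of $\Delta_{j,d}$ whose reading permutation is $\sigma$, hence equal to $A^{(j)}_\sigma$, and therefore $A=\vec{c}+A^{(j)}_\sigma$. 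The step that genuinely requires care is confirming that the column-insertion (resp.\ stripping) procedure respects the new per-row cap of $r$ copies of each value, a constraint absent from the $i=1$ case; once this is checked, the rest of the argument transcribes the proof of \cref{thm:bijection-alcoves-pairs} line for line.
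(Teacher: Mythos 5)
Your proposal is correct and matches the paper's approach: the paper simply declares the proof ``analogous to the proof of Theorem~\ref{thm:bijection-alcoves-pairs}'', and you carry out exactly that analogy, building the inverse $\alc_i(\vec{c},\sigma)=\vec{c}+A^{(j)}_\sigma$ and verifying it via the decorated-matrix insertion of $(d+1)c_k$ entries ``$k$'' after the $(k-1)$-th mark. You even flag the one genuinely new check (each coordinate of the translate is at most $c_k+1\leq r$, which for $i=1$ was automatic from the column count $r$ but here is not), which the paper leaves implicit.
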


\noindent From \cref{thm:bijection-alcoves-words-general-case,thm:bijection-alcoves-pairs-general-case}, we obtain a combinatorial proof of \cref{eq:identity-general-i}. 

\begin{example}
    Fix $d=5$, $r=4$, $i=2$ and $j=3$ as parameters in the previous theorems. The set of points $$A = \{(2,3,0,1,2,0),(2,2,1,1,2,0),(2,2,0,2,2,0),(2,2,0,2,1,1),(1,3,0,2,1,1),(1,3,0,1,2,1)\}$$ defines an alcove in $\mathcal{A}(4\Delta_{2,5})$. It satisfies $\conv(A) \subseteq 4A^{(2)}_{3\,1\,2\,4\,5}$, so $\tau_A = 3\,1\,2\,4\,5$. Moreover, $A^{(2)}_{3\,1\,2\,4\,5} = \conv(\mathfrak{B})$ with 
    \begin{align*}
        \mathfrak{B} &= \{ \vec{v}_1,\vec{v}_2,\vec{v}_3,\vec{v}_4,\vec{v}_5,\vec{v}_6\} \\
        &= \left\{ (1,1,0,0,0,0),(1,0,1,0,0,0),(1,0,0,1,0,0),(0,1,0,1,0,0),(0,1,0,0,1,0),(0,1,0,0,0,1)\right\}.
    \end{align*}
    Using $\mathfrak{B}$ as a basis for $\mathbb{R}_x^6$, the elements of $A$ can be rewritten as $$A \cong \{(1,0,1,0,2,0),(1,1,0,1,1,0),(0,0,2,0,2,0),(0,0,2,0,1,1),(0,0,1,1,1,1),(0,0,1,0,2,1)\}$$ where $\cong$ denotes the change of basis from the standard basis to $\mathfrak{B}$. From this description we see that the decorated matrix of $A$ relative to $4A^{(2)}_{3\,1\,2\,4\,5}$ is 
    \begin{center}
        \begin{tikzpicture}[scale = 0.7]
            \def \w{1};
            \def \h{1};
            \def \r{0.3};
            \def \rows{6};
            \def \cols{4};
            \foreach \i in {0,...,\rows}
            {
            \draw[gray!50] (0,\i*\h)--(\w*\cols,\i*\h);
            }
            \foreach \i in {0,...,\cols}
            {
            \draw[gray!50] (\w*\i,0)--(\w*\i,\rows*\h);
            }
            \foreach \xx\yy\c in {0/5/1,0/4/2,0/3/3,0/2/3,0/1/3,0/0/3,
                                  1/5/3,1/4/3,1/3/3,1/2/3,1/1/4,1/0/5,
                                  2/5/5,2/4/5,2/3/5,2/2/5,2/1/5,2/0/5,
                                  3/5/5,3/4/5,3/3/5,3/2/6,3/1/6,3/0/6
                                  }
            {
            \node at (\w*.5+\w*\xx,\h*.5+\h*\yy) {\c};
            }
            \foreach \xx\yy in {0/5,0/4,1/2,1/1,3/3}
            {
            \draw[red,ultra thick] (\xx,\yy) --  (\xx+1,\yy);
            }
        \end{tikzpicture}
    \end{center}
    Putting all the information together we obtain $\words_i(A) = (1\,1\,4\,2\,2 \;,\; 3\,1\,2\,4\,5) \in [4]^5 \times \mathfrak{A}(5,2).$ 
    
    To compute $\pair_i(A)$ we consider the decorated matrix of $\mathcal{I}_A$ with respect to the canonical basis of $\mathbb{R}_x^6$. That is,
    \begin{center}
        \begin{tikzpicture}[scale=0.7]
                \def \w{1};
                \def \h{1};
                \def \r{0.3};
                \def \rows{6};
                \def \cols{8};
                \foreach \i in {0,...,\rows}
                {
                \draw[gray!50] (0,\i*\h)--(\w*\cols,\i*\h);
                }
                \foreach \i in {0,...,\cols}
                {
                \draw[gray!50] (\w*\i,0)--(\w*\i,\rows*\h);
                }
                \foreach \xx\yy\c in {0/5/1,0/4/1,0/3/1,0/2/1,0/1/1,0/0/1,
                                      1/5/1,1/4/1,1/3/1,1/2/1,1/1/2,1/0/2,
                                      2/5/2,2/4/2,2/3/2,2/2/2,2/1/2,2/0/2,
                                      3/5/2,3/4/2,3/3/2,3/2/2,3/1/2,3/0/2,
                                      4/5/2,4/4/3,4/3/4,4/2/4,4/1/4,4/0/4,
                                      5/5/4,5/4/4,5/3/4,5/2/4,5/1/4,5/0/5,
                                      6/5/5,6/4/5,6/3/5,6/2/5,6/1/5,6/0/5,
                                      7/5/5,7/4/5,7/3/5,7/2/6,7/1/6,7/0/6
                                      }
                    {
                    \node at (\w*.5+\w*\xx,\h*.5+\h*\yy) {\c};
                    }
                    \foreach \xx\yy in {1/2,4/4,4/5,5/1,7/3}
                    {
                    \draw[red,ultra thick] (\xx,\yy) --  (\xx+1,\yy);
                    }
        \end{tikzpicture}
    \end{center}
    and from this matrix we obtain $\comp'(A) = (1,2,0,1,1,0) \in \mathfrak{C}(3,6,5)$ and $\sigma'_A = 4\,1\,2\,5\,3 \in \mathfrak{A}(5,3).$ 
\end{example}

\subsubsection{Dual graph of the triangulation}\label{sec:dual_graph_hypersimplex_general}

Let $G_{\mathcal{A},i,d}$ be the dual graph of the alcoved triangulation of $\Delta_{i,d}$. We present a description of this graph using permutations in view of \cref{thm:alcoves-hypersimplex-eulerian}. The proof of this characterization is analogous to the one from \cref{thm:dual_graph_alcoved_dilated_standard_simplex}. 

\begin{prop}\label{prop:dual_graph_hypersimplex}
    The graph $G_{\mathcal{A},i,d}$ is isomorphic to the graph with vertex set $\mathfrak{A}(d,i)$ where $\{\sigma,\tau\}$ is an edge if and only if the permutations satisfy that
    \begin{enumerate}
        \item $\sigma = s_k \tau$ for some $k=1,2,\dots,d-1$ (where $s_k$ is the $k$th simple transposition), or
        \item if $\sigma = \sigma_1\,\sigma_2\,\ldots\,\sigma_d$ and $\tau = \tau_1\,\tau_2\,\ldots\,\tau_d$ are the one-line notations, and $\ell$ and $m$ are the indices such that $\sigma_\ell = 1$ and $\sigma_m=d$, then either
        \begin{enumerate}
            \item $\tau_\ell = d$ and $\tau_j = \sigma_j+1$ for $j\neq\ell$, or
            \item $\tau_m = 1$ and $\tau_j = \sigma_j-1$ for $j\neq m$.
        \end{enumerate}
    \end{enumerate}
\end{prop}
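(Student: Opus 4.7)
The plan is to mimic the strategy of Theorem~\ref{thm:dual_graph_alcoved_dilated_standard_simplex}, using the bijection $\sigma^{(i)}_\bullet$ from Theorem~\ref{thm:alcoves-hypersimplex-eulerian} as the vertex bijection between $G_{\mathcal{A},i,d}$ and the graph described in the statement. It then suffices to show that this bijection preserves edges in both directions. Two alcoves $A, B \in \mathcal{A}(\Delta_{i,d})$ share a codimension-$1$ face exactly when they share $d$ of their $d+1$ vertices, which translates to the decorated matrices $\widetilde{M}_{\mathcal{I}_A}$ and $\widetilde{M}_{\mathcal{I}_B}$ differing in exactly one row. The bulk of the argument is a case analysis on the position of this differing row, and then an extraction of the corresponding operation on permutations by reading the marks of the modified matrix.

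In the \emph{internal} case, when some row $R_k$ with $2 \le k \le d$ is replaced, the fixed rows $R_{k-1}$ and $R_{k+1}$, together with the structural constraints derived in the proof of Theorem~\ref{thm:alcoves-hypersimplex-eulerian} (exactly one mark between each adjacent pair, rows without repeats, and at least one mark per column), force the new row to differ from $R_k$ in a unique way: one interchanges the columns of the two marks $m_{k-1}$ and $m_k$ flanking row $k$. This swap exchanges the boundary-row labels $k-1$ and $k$ in the reading order of marks, so the new permutation is obtained by transposing the values $k-1$ and $k$ in the one-line notation, which is precisely the action of a simple transposition in case (1). In the \emph{boundary} case, when the differing row is $R_1$ or $R_{d+1}$, the single mark adjacent to the changed row must be relocated in a way that preserves the requirement that every column contains at least one mark; this forces a global reorganization whose effect on the reading order yields conditions (2a) and (2b), respectively. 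In parallel one verifies that the descent count of the permutation is preserved (so the resulting permutation remains in $\mathfrak{A}(d,i)$) and that each of the prescribed operations is an involution, so the induced adjacency is symmetric.

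The main obstacle will be the boundary case. Unlike the internal swap, which is dictated by purely local constraints, a boundary change can move a mark to a new column and thereby reshuffle the reading order across the entire matrix. Handling this cleanly requires an explicit description of which column of the modified matrix acquires an additional mark, and translating this back into the positions of $1$ and $d$ in $\sigma$ — this is where the indices $\ell$ and $m$ in the statement enter, and where the formulas for $\tau_j$ have to be carefully checked against the rearranged reading word.
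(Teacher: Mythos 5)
Your overall strategy is exactly the paper's: the paper proves this proposition only by declaring it analogous to \cref{thm:dual_graph_alcoved_dilated_standard_simplex}, and your internal-row analysis (the replacement row is forced to swap the columns of the two marks flanking row $k$, which transposes the values $k-1$ and $k$ in the reading word, i.e.\ left multiplication by a simple transposition) is the correct analogue of the middle case there and matches condition (1).

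Your description of the boundary case, however, is not how the mechanism works, and executing it as written would fail. If the differing row is $R_1$, you cannot keep the new row in position $1$ and merely ``relocate the single adjacent mark'': because there are no marks on the bottom edges of the decorated matrix, the first row is forced to equal $(1,\,R_{d+1,1},\ldots,R_{d+1,i-1})$, so it is completely determined by the last row and the only candidate for a replacement in slot $1$ is $R_1$ itself. The correct mechanism, as in the proof of \cref{thm:dual_graph_alcoved_dilated_standard_simplex}, is that $R_1$ is deleted, the rows $R_2,\ldots,R_{d+1}$ slide up, and a new row is appended at the bottom; every retained mark keeps its column but its label drops by $1$, the mark labelled $1$ disappears, and a new mark labelled $d$ appears in the slot of the column-reading order that the old mark $1$ occupied. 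Hence deleting $R_1$ yields $\tau_\ell = d$ and $\tau_j = \sigma_j - 1$ for $j\neq\ell$, and dually deleting $R_{d+1}$ yields $\tau_m = 1$ and $\tau_j = \sigma_j + 1$ for $j\neq m$. Note that this is \emph{not} literally what (2a)/(2b) assert: the $\pm 1$'s are attached to the wrong indices in the printed statement (for $\Delta_{2,3}$, whose dual graph is the $4$-cycle $132\sim 213\sim 312\sim 231\sim 132$, the printed formulas do not even output permutations), so a careful execution of your plan must also correct the statement rather than ``yield conditions (2a) and (2b)'' as written. Finally, your claim that each prescribed operation is an involution is false for the boundary operations: the two value shifts are mutually inverse rather than self-inverse, and the symmetry of the adjacency relation comes from that mutual inversion.
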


\begin{example}
    \cref{fig:dual_graph_delta-2-4_ss-perms} shows the alcoved triangulation of $\Delta_{2,4}$ in terms of decorated matrices of sorted sets and also in terms of permutations from $\mathfrak{A}(4,2)$.
    \begin{figure}[t]
        \centering
        \hspace{0.75cm}\includegraphics[width=0.9\linewidth]{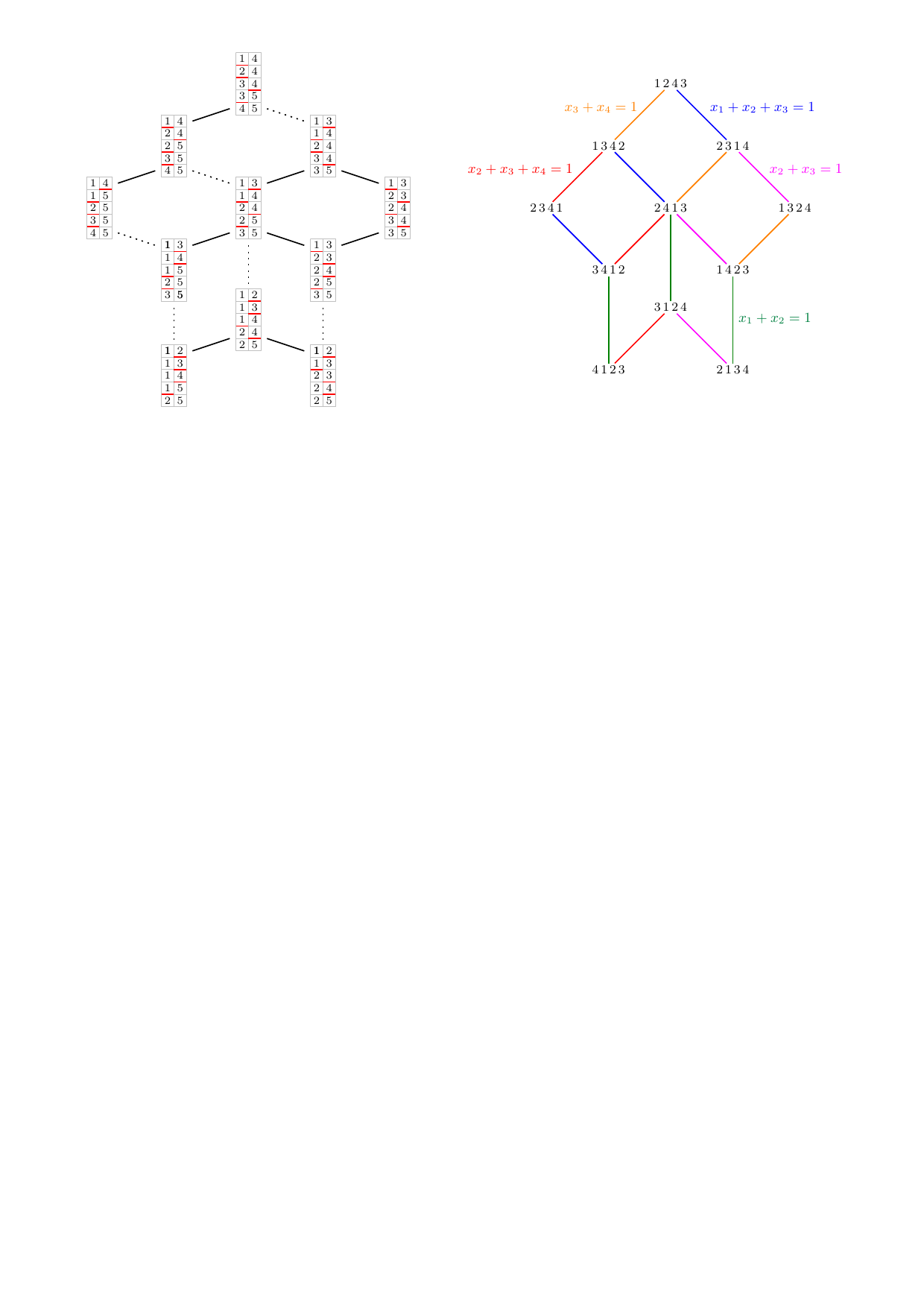}
        \hspace{1cm}
        \caption{The dual graph of the alcoved triangulation of $\Delta_{2,4}$ in terms of sorted sets on the left; the solid and dotted edges correspond, respectively, to conditions $1.$ and $2.$ from \cref{prop:dual_graph_hypersimplex}. On the right, the same graph but in terms of permutations in $\mathfrak{A}(4,2)$; the colors of the edges represent the different hyperplanes that separate the corresponding alcoves.}
        \label{fig:dual_graph_delta-2-4_ss-perms}
    \end{figure}
    In the graph with decorated matrices, the solid edges correspond to changing a middle row (i.e. not the first or last) of the matrix, and the dotted edges change a top or bottom row of the matrix. These are the analogous cases from \cref{thm:dual_graph_alcoved_dilated_standard_simplex} and correspond to conditions 1. and 2. from \cref{prop:dual_graph_hypersimplex} respectively. In the graph with permutations we show the different hyperplanes that generate the triangulation with different colors. 
\end{example}

We would like to describe the dual graph of the alcoved triangulation of $r\Delta_{i,d}$ in the setting of \cref{obs:main_idea}, that is, as a \emph{composition of dual graphs}. This is equivalent to describing the graph using the labeling of $\mathcal{A}(r\Delta_{i,d})$ from \cref{thm:bijection-alcoves-words-general-case}. We start by defining the operation on graphs that formalizes this idea.

\begin{definition}\label{def:composition_of_graphs}
    Let $G=(V(G),E(G))$ and $H$ be finite nonempty graphs. For each vertex $v\in V(G)$ take a copy of $H$ and denote it by $H_v$, and for each edge $e=\{x,y\} \in E(G)$ pick a bijection $f_e:X_e\to Y_e$ for some $X_e\subseteq V(H_x)$ and $Y_e\subseteq V(H_y)$. Define $G\langle H \rangle$ as the graph with vertices $$V\left ( G\langle H \rangle \right ) = \bigsqcup_{v\in V(G)} V(H_v)$$ and edges $$E\left ( G\langle H \rangle \right ) = \bigsqcup_{v\in V(G)} E(H_v) \sqcup \bigsqcup_{e\in E(G)}\left\{ \{x,f_e(x)\} \,:\, x\in X_e \right\}$$ where $\sqcup$ denotes disjoint union.   
\end{definition}

Intuitively, the previous construction is as follows. We place a copy of $H$ in each vertex of $G$. Then for each one of the edges $e$ of $G$, we connect the copies corresponding to the endpoints of $e$ by creating edges between the sets $X_e$ and $Y_e$ according to the bijection $f_e$. Because of this, we refer to the sets $X_e$ and $Y_e$ as the \emph{connecting sets}. This notion generalizes the Cartesian product of graphs, which is recovered by setting all the bijections $f_e$ to be the identity on $V(H)$.

For our purposes, we take $G=G_{\mathcal{A},i,d}$ and $H = G_{r,d}$; for the connecting sets we take the boundary words $W_j^\circ$ from \cref{def:w_circ_j} and the bijections are the identity maps between them. However, additional conditions on how to pick the order of connection of the boundaries must be taken into consideration, as the next example shows.

\begin{example}\label{ex:graph-composition}
Consider the case of $2\Delta_{2,3}$. Then the relevant graphs for the construction are
\begin{figure}[ht]
    \centering
    \includegraphics[width=0.6\linewidth]{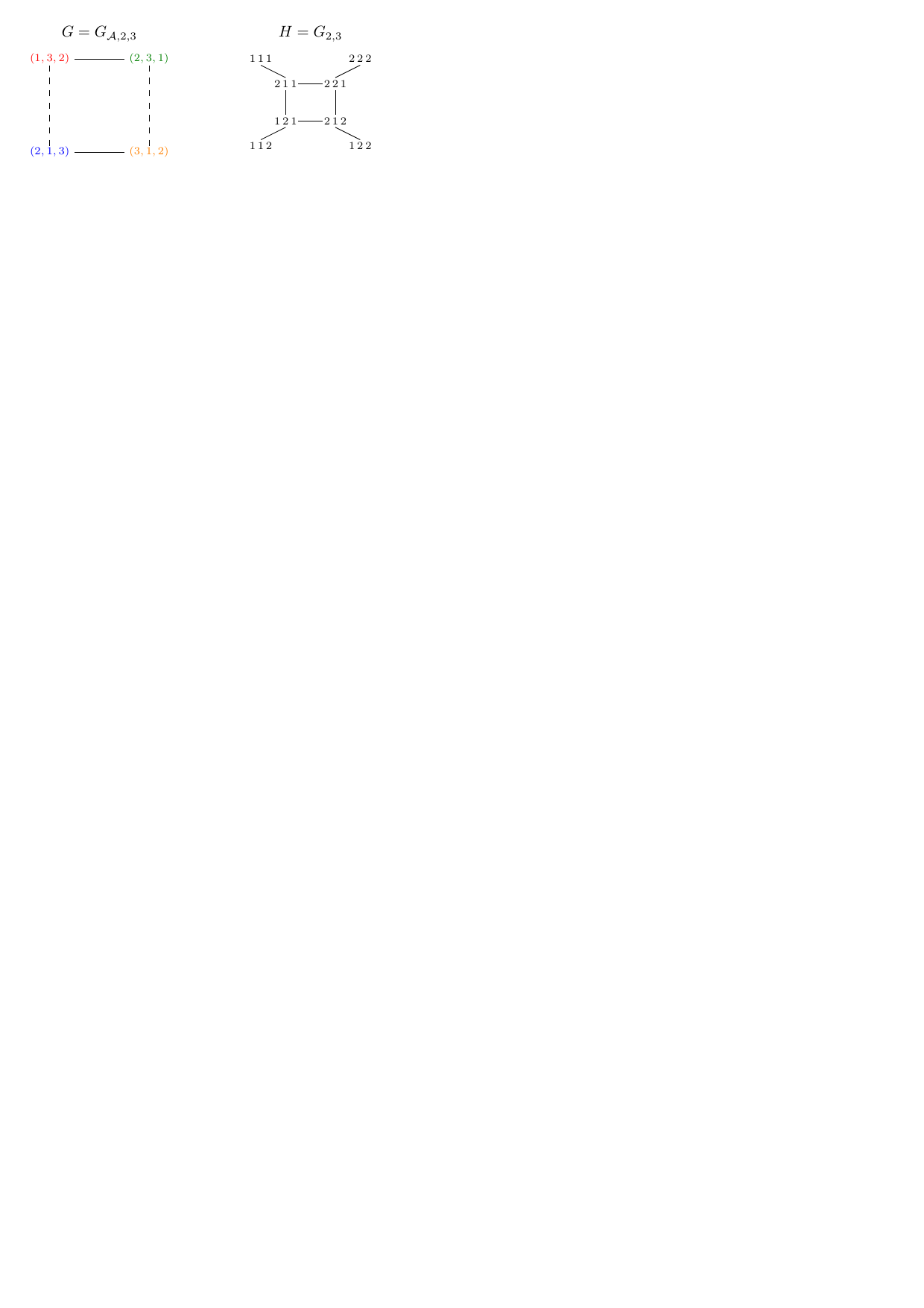}
\end{figure}    

\noindent In $G_{\mathcal{A},2,3}$ we wrote the one-line notation of the permutations with brackets and commas to avoid confusion to the words in $[2]^3$. As in \cref{fig:dual_graph_delta-2-4_ss-perms}, the solid edges of $G$ are encoding adjacency of the simplices through the hyperplane $x_2+x_3 = 1$ and the dashed edges through the hyperplane $x_1+x_2 = 1$. Since there are two pairs of edges encoding the same adjacency, the connecting sets from $H_{(1,3,2)}$ to $H_{(2,3,1)}$, and from $H_{(2,1,3)}$ to $H_{(3,1,2)}$ have to coincide. The same happens for the dashed edges. Thus, picking the set $W_1^\circ \subset [2]^3$ for the solid edges and $W_2^\circ\subset [2]^3$ for the dashed edges, we obtain the connections that are shown in \cref{fig:compo-dual-graphs}. Picking different connecting sets for edges of the same type yields a graph that does not correspond to the alcoved triangulation of the dilated polytope. This can be seen from the existence of the $4$-cycle including the four different instances of the word $1\,1\,1$, and the fact that picking the sets incorrectly does not produce such cycle. 

\begin{figure}[ht]
    \centering
    \includegraphics[width=0.7\linewidth]{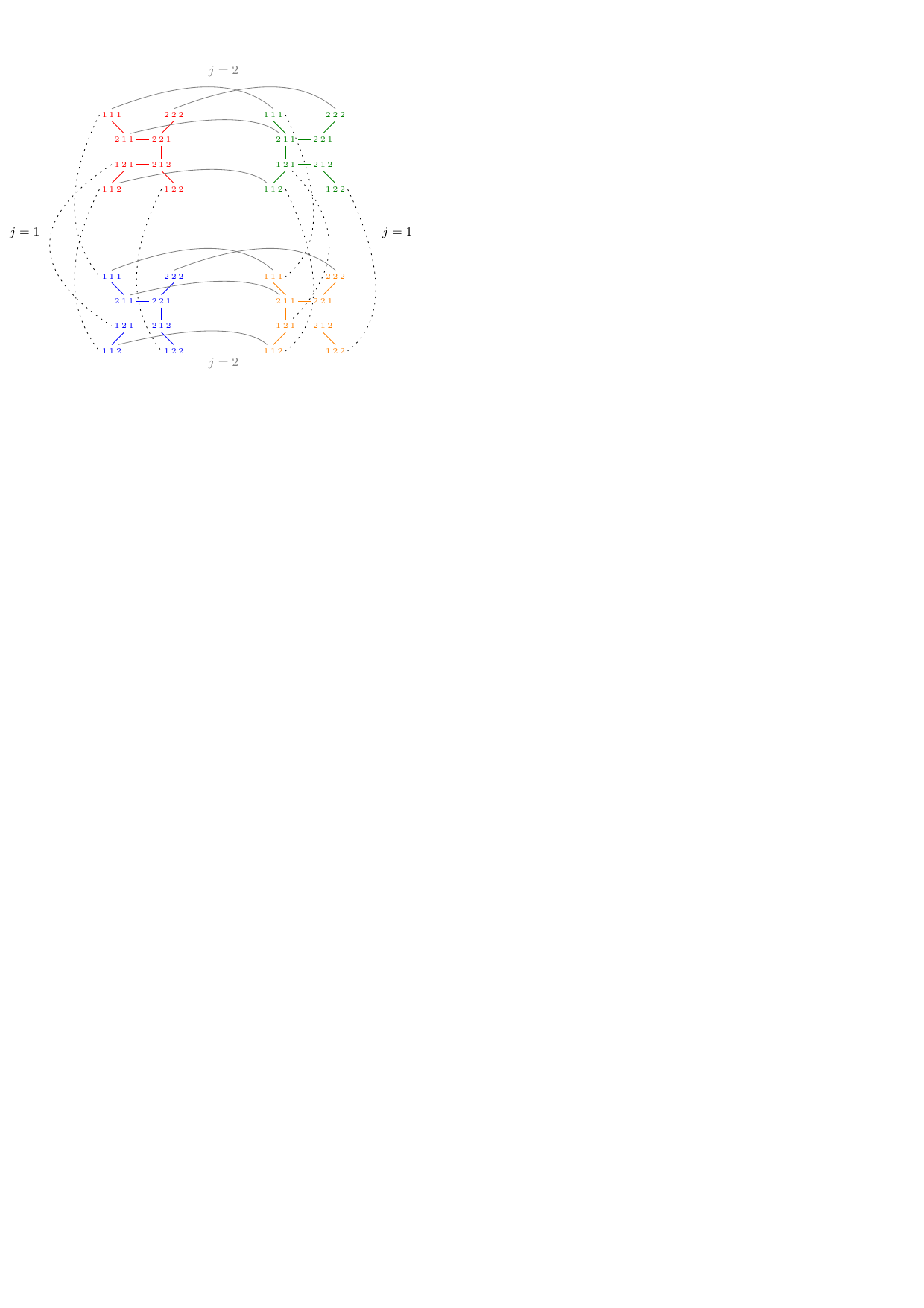}
    \caption{The construction of $G_{\mathcal{A},2,3}\langle G_{2,3} \rangle$ when choosing $W_j^\circ$ as connecting sets according to the specified labels. This graph is isomorphic to the dual graph of the alcoved triangulation of $2\Delta_{2,3}$.}
    \label{fig:compo-dual-graphs}
\end{figure}

\end{example}

Since there is extra compatibility needed depending on the type of hyperplanes to which the edges of the graph $G = G_{\mathcal{A},i,d}$ correspond, we need to determine the correct connecting sets. From the reasoning in \cref{ex:graph-composition}, we propose the following candidates. Every edge of $G$ corresponds to a hyperplane from the affine Coxeter arrangement of type $A_{d+1}$. Thus, by identifying the edges with the hyperplanes, we obtain an edge-coloring of $G$ (see \cref{fig:dual_graph_delta-2-4_ss-perms}) that, we believe, induces a correct choice of connecting sets. 

\begin{conjecture}\label{conj:dual_graph_alcoved_dilated_hypersimplex}
    Let $G = G_{\mathcal{A},i,d}$ and $H = G_{r,d}$. The edge-coloring of $G$ determined by the hyperplane types prescribes a choice of connecting sets that make $G\langle H \rangle$ isomorphic to the dual graph of the alcoved triangulation of $r\Delta_{i,d}$. 
\end{conjecture}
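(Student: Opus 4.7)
The plan is to use the bijection $\words_i$ from \cref{thm:bijection-alcoves-words-general-case} as the vertex identification and then verify edge compatibility separately for the two kinds of adjacency that appear in the alcoved triangulation of $r\Delta_{i,d}$: \emph{internal} edges, where both alcoves lie inside a common $rB$ with $B\in\mathcal{A}(\Delta_{i,d})$, and \emph{external} edges, where the two alcoves lie in distinct $rB_1$ and $rB_2$. By \cref{thm:bijection-alcoves-words-general-case}, each $A\in\mathcal{A}(r\Delta_{i,d})$ is encoded as $(\tau_A,\word_i'(A))\in \mathfrak{A}(d,i)\times [r]^d$, which is exactly the vertex set of $G\langle H\rangle$, where $\tau_A=\sigma_B^{(i)}$ is the permutation labeling the unique simplex $B$ of the alcoved triangulation of $\Delta_{i,d}$ with $A\subseteq rB$, and $\word_i'(A)=\word_1(\varphi_B(A))$ for the canonical affine equivalence $\varphi_B:B\to \Delta_{1,d}$ dictated by the sorted-set ordering of the vertices of $B$ from \cref{thm:alcoves-sorted-sets}.

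For internal edges the statement reduces to \cref{thm:dual_graph_alcoved_dilated_standard_simplex}. If $A_1,A_2\subseteq rB$ are adjacent in $\mathcal{A}(r\Delta_{i,d})$, then $\tau_{A_1}=\tau_{A_2}=\sigma_B^{(i)}$, the affine equivalence $\varphi_B$ restricts to a lattice-preserving map taking the local alcoved triangulation on $rB$ to that on $r\Delta_{1,d}$, and hence $A_1\sim A_2$ if and only if $\word_i'(A_1)\sim \word_i'(A_2)$ in $G_{r,d}$. These are precisely the edges contributed by the copy $H_{\sigma_B^{(i)}}$ in $G\langle H\rangle$. For external edges, if $A_1\subseteq rB_1$ and $A_2\subseteq rB_2$ with $B_1\neq B_2$ share a codimension-one face, the face lies in $rB_1\cap rB_2=r(B_1\cap B_2)$, which forces $B_1\sim B_2$ in $G_{\mathcal{A},i,d}$ and forces each $A_\ell$ to be a boundary alcove of $rB_\ell$ touching the shared facet. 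By \cref{prop:boundary_alcoves} and \cref{def:w_circ_j}, the boundary alcoves of $rB_\ell$ against that facet are, via $\varphi_{B_\ell}$, exactly the alcoves whose words lie in $W_{j_\ell}^\circ$, where $H_{j_\ell}\subseteq \Delta_{1,d}$ is the image of the shared facet. The adjacency $A_1\sim A_2$ in the alcoved triangulation holds exactly when $A_1$ and $A_2$ induce the same alcove on the common face $r(B_1\cap B_2)$, which gives a canonical bijection $f_e:W_{j_1}^\circ\to W_{j_2}^\circ$ between the two connecting sets.

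To close the argument one must show that the pair $(W_{j_1}^\circ,W_{j_2}^\circ)$ and the bijection $f_e$ depend only on the color of the edge $e=\{B_1,B_2\}$ in $G$, i.e.\ only on the type (in the affine Coxeter arrangement of type $A_d$) of the hyperplane separating $B_1$ from $B_2$; this is the content of the conjecture. The key computation is to inspect each of the two adjacency patterns described in \cref{prop:dual_graph_hypersimplex}: for a simple-transposition adjacency $\sigma=s_k\tau$ the sorted-set orderings of $B_1$ and $B_2$ differ by the swap of two adjacent rows of a decorated matrix, so the canonical $\varphi_{B_\ell}$ send the shared facet to the same coordinate facet $H_{k+1}$ of $\Delta_{1,d}$ and the induced identification is the natural one; for the wrap-around adjacencies in condition~2 of \cref{prop:dual_graph_hypersimplex}, a similar analysis on decorated matrices determines the (possibly distinct) pair $j_1,j_2$ and a cyclic re-indexing as $f_e$. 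The main obstacle I expect is verifying this uniformity in the wrap-around case: one must confirm that the duplication-map structure of $W_j^\circ$ from \cref{prop:boundary_alcoves} interacts correctly with the change of sorted-set frames between $B_1$ and $B_2$, and that the resulting bijection $f_e$ indeed factors through the common triangulation of $r(B_1\cap B_2)$ regardless of which edge of that color is chosen. Once this compatibility is established, combining Steps 1--3 yields an isomorphism $G\langle H\rangle\cong G_{\mathcal{A},r\Delta_{i,d}}$, proving the conjecture.
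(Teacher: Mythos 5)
The statement you are proving is stated in the paper as \cref{conj:dual_graph_alcoved_dilated_hypersimplex}, i.e.\ as a \emph{conjecture}: the paper offers no proof, only the motivating computation in \cref{ex:graph-composition} showing that the choice of connecting sets is delicate. So there is no proof of record to compare against, and the question is whether your argument actually closes the gap the author left open. It does not. Your reduction of the internal edges to \cref{thm:dual_graph_alcoved_dilated_standard_simplex} is sound, and your observation that an external adjacency forces $B_1\sim B_2$ in $G_{\mathcal{A},i,d}$ and forces $A_1,A_2$ to be boundary alcoves against the shared facet is the right framing. But the entire content of the conjecture is the step you defer: showing that the bijection $f_e:W_{j_1}^\circ\to W_{j_2}^\circ$ induced by ``inducing the same alcove on $r(B_1\cap B_2)$'' depends only on the hyperplane type of the edge $e$, and that it agrees with the identification prescribed by the coloring. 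You write that this is ``the main obstacle I expect'' and conclude ``once this compatibility is established,'' which is an admission that the step is not carried out. In particular, the wrap-around adjacencies of condition~2 in \cref{prop:dual_graph_hypersimplex} change the sorted-set frame of $B$ by a cyclic shift, so the two maps $\varphi_{B_1},\varphi_{B_2}$ send the shared facet to coordinate hyperplanes $H_{j_1}$ and $H_{j_2}$ with $j_1\neq j_2$ in general, and one must compute explicitly how the duplication maps $\delta_j$ from \cref{prop:boundary_alcoves} transform under that re-indexing. The paper's \cref{ex:graph-composition} shows this is exactly where a wrong choice produces a non-isomorphic graph (the $4$-cycle through the four copies of $1\,1\,1$ is destroyed), so the compatibility cannot be waved through.

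A secondary issue: you treat $\varphi_B$ as canonically determined by ``the sorted-set ordering of the vertices of $B$,'' but the paper only asserts the existence of an affine equivalence $B\to\Delta_{1,d}$ in the construction preceding \cref{thm:bijection-alcoves-words-general-case}. For the conjecture one needs this normalization pinned down, since a different choice of $\varphi_B$ permutes the labels $j$ of the facets $H_j$ and hence which set $W_j^\circ$ plays the role of connecting set. In short: your proposal is a reasonable plan of attack and correctly isolates where the difficulty lies, but it proves only the parts that were already established in the paper (\cref{thm:dual_graph_alcoved_dilated_standard_simplex}, \cref{prop:boundary_alcoves}, \cref{prop:dual_graph_hypersimplex}) and leaves the conjectural core unresolved.
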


\section*{Acknowledgements}

This work was partially supported by NSERC grant RGPIN-2021-02568. The author is grateful to Volkmar Welker for pointing out that there was not known combinatorial proof of the identities. Also to Santiago Estupi\~n\'an Salamanca, Cicely Henderson, and Federico Castillo for valuable conversations and comments on the document, and to Sophie Spirkl for pointing out the connection of the graph operation with Cartesian products.

\bibliographystyle{abbrv}
\bibliography{references}

\end{document}